  \providecommand\BibTeX{{%
    \normalfont B\kern-0.5em{\scshape i\kern-0.25em b}\kern-0.8em\TeX}}}
\newif\ifcomment
\newcommand{\mc}{\mathcal}
\newcommand{\mf}{\mathfrak}
\newcommand{\mbb}{\mathbb}
\newcommand{\mbf}{\mathbf}
\newcommand{\norm}[1]{\left\lVert#1\right\rVert}
\newcommand{\abs}[1]{\lvert#1\rvert}
\newcommand{\set}[1]{\left\{#1\right\}}
\newcommand{\lr}[1]{\langle #1 \rangle}
\newcommand{\ceil}[1]{\lceil #1 \rceil}
\newcommand{\ul}[1]{\underline{#1}}
\DeclareMathOperator{\Sym}{Sym}
\DeclareMathOperator{\tr}{Tr}
\DeclareMathOperator{\normtr}{tr}
\def\R{{\mathbb R}}
\def\N{{\mathbb N}}
\def\Sym{\hbox{\rm{Sym}}}
\DeclareMathOperator{\I}{I}
\DeclareMathOperator{\II}{II}
\DeclareMathOperator{\diag}{diag}
\DeclareMathOperator{\var}{var}
\DeclareMathOperator{\s}{s}
\DeclareMathOperator{\cyc}{cyc}
\newtheorem{theorem}{Theorem}[section]
\newtheorem{corollary}[theorem]{Corollary}
\newtheorem{lemma}[theorem]{Lemma}
\newtheorem{proposition}[theorem]{Proposition}
\theoremstyle{definition}
\newtheorem{definition}[theorem]{Definition}
\theoremstyle{remark}
\newtheorem{remark}[theorem]{Remark}
\numberwithin{equation}{section}
\begin{document}

%%
%% The "title" command has an optional parameter,
%% allowing the author to define a "short title" to be used in page headers.
\title{The Constant Trace Property in Noncommutative Optimization}

%%
%% The "author" command and its associated commands are used to define
%% the authors and their affiliations.
%% Of note is the shared affiliation of the first two authors, and the
%% "authornote" and "authornotemark" commands
%% used to denote shared contribution to the research.
\begin{comment}
\author{Ben Trovato}
\authornote{Both authors contributed equally to this research.}
\email{trovato@corporation.com}
\orcid{1234-5678-9012}
\author{G.K.M. Tobin}
\authornotemark[1]
\email{webmaster@marysville-ohio.com}
\affiliation{%
  \institution{Institute for Clarity in Documentation}
  \streetaddress{P.O. Box 1212}
  \city{Dublin}
  \state{Ohio}
  \country{USA}
  \postcode{43017-6221}
}
\end{comment}

\author{Ngoc Hoang Anh Mai}
\affiliation{%
  \institution{LAAS}
  \streetaddress{7 Avenue de Colonel Roche}
  \city{Toulouse}
  \country{France}
  \postcode{31400}}
\email{nhmai@laas.fr}

\author{Abhishek Bhardwaj}
\affiliation{%
  \institution{LAAS}
  \streetaddress{7 Avenue de Colonel Roche}
  \city{Toulouse}
  \country{France}
  \postcode{31400}}
\email{abhishek.bhardwaj@laas.fr}

\author{Victor Magron}
\affiliation{%
  \institution{LAAS}
  \streetaddress{7 Avenue de Colonel Roche}
  \city{Toulouse}
  \country{France}
  \postcode{31400}}
\email{victor.magron@laas.fr}

%%
%% By default, the full list of authors will be used in the page
%% headers. Often, this list is too long, and will overlap
%% other information printed in the page headers. This command allows
%% the author to define a more concise list
%% of authors' names for this purpose.
\renewcommand{\shortauthors}{Mai, Bhardwaj and Magron}

%%
%% The abstract is a short summary of the work to be presented in the
%% article.
\begin{abstract}
In this article, we show that each semidefinite relaxation of a ball-constrained noncommutative polynomial optimization problem can be cast as a  semidefinite program with a constant trace matrix variable.
We then demonstrate how this constant trace property can be exploited via first order numerical methods to solve efficiently the  semidefinite relaxations of the noncommutative problem.
\end{abstract}
%In this article we prove that constrained noncommutative optimization problems on the ball maintain a constant trace. 
%%
%% The code below is generated by the tool at http://dl.acm.org/ccs.cfm.
%% Please copy and paste the code instead of the example below.
%%
%\begin{CCSXML}
%<ccs2012>
% <concept>
%  <concept_id>10010520.10010553.10010562</concept_id>
%  <concept_desc>Computer systems organization~Embedded systems</concept_desc>
%  <concept_significance>500</concept_significance>
% </concept>
% <concept>
%  <concept_id>10010520.10010575.10010755</concept_id>
%  <concept_desc>Computer systems organization~Redundancy</concept_desc>
%  <concept_significance>300</concept_significance>
% </concept>
% <concept>
%  <concept_id>10010520.10010553.10010554</concept_id>
%  <concept_desc>Computer systems organization~Robotics</concept_desc>
%  <concept_significance>100</concept_significance>
% </concept>
% <concept>
%  <concept_id>10003033.10003083.10003095</concept_id>
%  <concept_desc>Networks~Network reliability</concept_desc>
%  <concept_significance>100</concept_significance>
% </concept>
%</ccs2012>
%\end{CCSXML}
%
%\ccsdesc[500]{Computer systems organization~Embedded systems}
%\ccsdesc[300]{Computer systems organization~Redundancy}
%\ccsdesc{Computer systems organization~Robotics}
%\ccsdesc[100]{Networks~Network reliability}

%%
%% Keywords. The author(s) should pick words that accurately describe
%% the work being presented. Separate the keywords with commas.
\keywords{noncommutative polynomial optimization, sums of hermitian squares, eigenvalue and trace optimization, conditional gradient-based augmented Lagrangian, constant trace property, semidefinite programming}

%% A "teaser" image appears between the author and affiliation
%% information and the body of the document, and typically spans the
%% page.
%\begin{teaserfigure}
%  \includegraphics[width=\textwidth]{ISSAC}
%  %\caption{Seattle Mariners at Spring Training, 2010.}
%  %\Description{Enjoying the baseball game from the third-base
%  %seats. Ichiro Suzuki preparing to bat.}
%  %\label{fig:teaser}
%\end{teaserfigure}

%%
%% This command processes the author and affiliation and title
%% information and builds the first part of the formatted document.
\maketitle

%--------------------
% Extension settings
%--------------------
\newboolean{showext}
\setboolean{showext}{false}
\ifthenelse{\boolean{showext}}
{%
	\newcommand{\ext}[1]{{\color{blue}#1}}%
}%
{%
	\newcommand{\ext}[1]{}%
}

\section{Introduction}

\emph{Polynomial optimization problems} (POP) are present in many areas of mathematics, and science in general. There are many applications in global optimization, control and analysis of dynamical systems to name a few  \cite{lasserre2010moments}, and being able to efficiently solve POP is of great importance. 

In this article we focus on \emph{noncommutative (nc) polynomial optimization problems} (NCPOP), that is, polynomial optimization with non-commuting variables. NCPOP has several applications in control \cite{skelton1997} and quantum information  \cite{gribling2018,pal2009,marecek2020}.

%Consider for instance, some classic problems of control theory \cite{skelton1997unified}, where the input variables are matrices, leading to an NCPOP.

Since the advent of interior point methods for \emph{semidefinite programs} (SDP) \cite{anjos2011handbook}, there have been many approaches to solving POP, using powerful representation results from \emph{real algebraic geometry} for positive polynomials. Inspired by Schm{\"u}dgen's solution to the \emph{moment problem} on compact semialgebraic sets \cite{schmudgen1991thek}, these methods aim to provide certificates of global positivity. There are natural analogues to these approaches in the nc setting, coming from \emph{free algebraic geometry} \cite{helton2013free}, and the \emph{tracial moment problems} \cite{burgdorf2012truncated}. 

A standard approach in the commutative setting, is \emph{Lasserre's Hierarchy} \cite{lasserre2001global}, which provides a sequence lower bounds on the optimal values for POPs, with guaranteed convergence under some natural constraints according to Putinar's Positivstellensatz \cite{Putinar1993positive}. 
This hierarchy and its nc extension to eigenvalue/trace optimization  \cite{pironio2010convergent,burgdorf2016optimization}, involve solving SDPs over the space of multivariate moment and nc Hankel matrices, respectively. 

Due to the current capacity of interior-point SDP solvers such as Mosek \cite{mosek2017mosek,andersen2000mosek}, these hierarchies can only be applied when the multivariate moment (or nc Hankel) matrices are of ``moderate'' size. Often restricting their use to polynomials of low degrees, or in few variables, with the situation being worse in the nc setting. 

A strategy for reducing the size of the SDP hierarchies is to exploit the sparsity structures of POPs. They include correlative sparsity (CS) in \cite{klep2019sparse} and term sparsity (TS), CS-TS in \cite{wang2020exploiting} all of which are the analogs of the commutative works about CS \cite{waki2006sums}, TS \cite{wang2021tssos,wang2021chordal} and CS-TS \cite{wang2020cs}.

Encouraged by \cite{helmberg2000spectral, yurtsever2019scalable}, in \cite{mai2020hierarchy, mai2020exploiting} the first and third authors showed how to exploit the \emph{Constant Trace Property} (CTP) for SDP relaxations of POPs, which is %property
satisfied when the matrices involved in the SDP relaxations have constant trace.
By utilizing first order spectral methods to solve the required SDP relaxations, they attained significant computational gains for POPs constrained on simple domains, e.g., sphere, ball, annulus, box and simplex. 

In this article, we extend the exploitation of the CTP to NCPOPs. Our two main contributions are the following:
%\begin{enumerate}
	%\item 
	First, we obtain analogous results to \cite{mai2020exploiting,mai2020hierarchy}, which ensure the CTP for a broad class of dense NCPOPs. 
	In particular, if nc ball (or nc polydisc) constraint(s) is present, then CTP holds.
	We also extend this CTP-framework to some NCPOPs with correlative sparsity.
	%One could extend this CTP-framework to NCPOPs with correlative sparsity  similarly.
	%\item 
	Secondly, We provide a Julia package for solving NCPOPs with CTP. The package makes use of first order methods for solving SDPs with CTP. We also demonstrate the numerical and computational efficiency of this approach, on some sample classes of dense NCPOPs and NCPOPs with correlative sparsity. %CS. 
%\end{enumerate}

%\input{Background}
\section{Definitions \& preliminaries}\label{sec:defs}

Here we introduce some basic preliminary knowledge needed in the sequel. For a more detailed introduction to the topics introduced in this section, the reader is referred to \cite{burgdorf2016optimization}.

\subsection{Noncommutative polynomials}
We denote by $\ul{X}$ the {noncommuting} %non-commuting
letters $X_{1}, \dotsc, X_{n}$. 
Let $\lr{\ul{X}} = \lr{X_{1},\dotsc,X_{n}}$ be the \emph{free monoid} generated by $\ul{X}$, and call its elements \emph{words} in $\ul{X}$. 
Given a word $w=X_{i_1}\dots X_{i_r}$, $w^{*}$ is its reverse, i.e., $w^*=X_{i_r}\dots X_{i_1}$.
Consider the free algebra $\mbb{R}\lr{\ul{X}}$ of polynomials in $\ul{X}$ with coefficients in $\mbb{R}$. 
Its elements are called \emph{noncommutative (nc) polynomials}. 
Endow $\mbb{R}\lr{\ul{X}}$ with the involution $f\rightarrow f^{*}$ which fixes $\mbb{R}\cup\set{\ul{X}}$ pointwise. 
The length of the longest word in a polynomial $f\in\mbb{R}\lr{\ul{X}}$ is called the \emph{degree} of $f$ and is denoted $\deg(f)$. We write $\mbb{R}\lr{\ul{X}}_{d}$ for all nc polynomials of degree at most $d$. 
The set of symmetric elements of $\mbb{R}\lr{\ul{X}}$ is defined as $\Sym\mbb{R}\lr{\ul{X}} = \set{f\in\mbb{R}\lr{\ul{X}} : f^{*}=f}$. We employ the graded lexicographic ordering on all structures { and objects we consider.}%defined above.

We write $\lr{\ul{X}}_{d}$ for the set of all words in $\lr{\ul{X}}$ of degree at most $d$, and we let $\mbf W_{d}(\ul{X}) {\equiv\mbf{W}_{d}}$  be the column vector of words in $\lr{\ul{X}}_{d}$, and $\mbf V_{d}(\ul{X}) {\equiv\mbf{V}_{d}}$ the column vector of words of degree $d$. We also denote by $\mbf W_{d}$ (resp. $\mbf V_{d}$) the set of all entries of $\mbf W_{d}(\ul{X})$ (resp. $\mbf V_{d}(\ul{X})$).
The length of $\mbf W_{d}$ is equal to  $\s(d,n) := \sum_{i=0}^{d} n^i$, which we write as $\s(d)$, when contextually appropriate. 
Given a polynomial $f\in\mbb{R}\lr{\ul{X}}_{d}$, let $\mbf f = (f_{w})_{w\in \mbf{W}_{d}}\in\mbb{R}^{\s(d)}$ be its vector of coefficients. 
It is clear that every polynomial $f\in\mbb{R}\lr{\ul{X}}_{d}$ is of the form $f = \sum_{w\in \mbf{W}_{d}}f_{w} w = \mbf{f}^{*}\mbf{W}_{d} = \mbf{W}_{d}^{*}\mbf{f}$. For $f\in\mbb{R}\lr{\ul{X}}$ let $\ceil{f} = \ceil{\deg(f)/2}$, and given some $k\in\mbb{N}$, we define $k_{f} : = k - \ceil{f}$, e.g., $\mbf{W}_{k-\ceil{f}} = \mbf{W}_{k_{f}}$. 
We use standard notations on $\mbb{R}^m$, i.e., given $\mbf{a}\in\mbb{R}^m$, $\norm{\mbf{a}}_{2}$ denotes the usual $2$-norm of $\mbf{a}$.%, and $\norm{\mbf{a}}_{\infty}$ is the usual supremum-norm. %\victor{Do we really use this $p$-norm notation?}

Let $\mbb{S}^{r}$ denote the space of real symmetric matrices of size $r$, we will normally omit the subscript $r$ when we discuss matrices of arbitrary size, or if the size is clear from context. 
Given $\mbf{A}\in\mbb{S}$, $\mbf{A}$ is positive semidefinite (psd) (resp. positive definite (pd)), if all eigenvalues of $\mbf{A}$ are non-negative (resp. positive), and we write $\mbf{A}\succeq0$ (resp. $\mbf{A}\succ0$). 
We denote by $\tr(\mbf{A})$ the trace {($\sum_{i=1}^{r} A_{i,i}$)} of the matrix $\mbf{A} \in \mbb{S}^{r}$ and $\normtr(\mbf{A}) = \frac{1}{r} \tr(\mbf{A})$ {is the} %its
normalized trace.
Let $\mbb{S}_{+}$ (resp. $\mbb{S}_{++}$) be the cone of psd (resp. pd) matrices.
For a subset $\mc{S}\subseteq\mbb{S}$, we define $\mc{S}_{+} := {\mc{S}\cap\mbb{S}_{+}}$ % \set{\mbf{A}\in\mc{S} : \mbf{A}\succeq0}$ 
and $\mc{S}_{++}:={\mc{S}\cap\mbb{S}_{++}}$. 
%\set{\mbf{A}\in\mc{S}:\mbf{A}\succ0}$. 
We write $\ul{A} = (\mbf{A}_{1}, \dotsc, \mbf{A}_{n})\in\mbb{S}^{n}$, and given $q\in\mbb{R}\lr{\ul{X}}$, by $q(\ul{A})$ we mean the evaluation of $q(\ul{X})$ on $\ul{A}$, i.e., replacement of the nc letters $X_{i}$ with the matrices $\mbf{A}_{i}$. We write $\diag(\mbf{B}_{1},\dotsc,\mbf{B}_{r})$ for the block diagonal matrix with diagonal blocks being $\mbf{B}_{i}$.

Finally, given a positive $m\in\mbb{N}$, we write $\mbb{N}^{\geq m} = \set{ m, m+1, \dotso }$, $[m] = \set{1,\dotsc,m}$, and we use $\abs{\cdot}$ to denote the cardinality of a set.

\subsection{Algebraic and geometric structures}\label{SS-AG}

Let $\mf{g} = \set{g_{0},\dotsc,g_{m}}$ and $\mf{h} = \set{h_{1}, \dotsc, h_{\ell}}$ be subsets of  $\Sym\mbb{R}\lr{\ul{X}}$, with the requirement that $g_{0}=1$, unless otherwise stated.

\subsubsection{Quadratic modules}
The \emph{quadratic module} generated by $\mf{g}$ is the set
$$
Q(\mf{g}) := \set{ \sum_{i=0}^m \sum_{j=1}^{r_j} p_i^{(j)*} g_i p_i^{(j)}  \ :\  r_j \in\N^{\ge 1}\,,\,p_i^{(j)}\in \R\langle\underline{ X}\rangle }\,.
$$
The \emph{ideal} generated by the set $\mf{h}$ is the set 
$
I(\mf{h}) := Q(\{\pm h_1,\dots,\pm h_\ell\})
$.
The quadratic module associated to $\mf{g}=\{g_0\}$, is the set of \emph{sums of Hermitian squares} (SOHS). %and is of special interest, so we will denote it by $\Sigma^{2}$. \victor{Do we use this notation $\Sigma^{2}$ later on? If not, let's remove it.}

 Given $k\in\mbb{N}$, the $k^{\text{th}}$-order truncation of $Q(\mf{g})$ (resp. $I(\mf{h})$), denoted by $Q_{k}(\mf{g})$ (resp. $I_{k}(\mf{h})$), is the set of all polynomials in $Q(\mf{g})$ (resp. $I(\mf{h})$) with degree at most $2k$.
Moreover, one has
\begin{equation*}
	\begin{gathered}
		Q_{k}(\mf{g}) = \set{ \sum_{i=0}^{m} \tr(\mbf{G}_{i}\mbf{W}_{k_{g_{i}}}g_{i} \mbf{W}_{k_{g_{i}}}^{*} ) : \mbf{G}_{i}\succeq0}\,,\\
		I_{k}(\mf{h})= \set{ \sum_{i=1}^{\ell} \tr(\mbf{H}_{i} \mbf{W}_{k_{h_i}} h_{i} \mbf{W}_{k_{h_i}}^{*}) : \mbf{H}_{i} \in\mbb{S} }\,.
	\end{gathered}
\end{equation*}
We say that $Q(\mf{g})+I(\mf{h})$ is \emph{Archimedean} if for all $q\in\mbb{R}\lr{\ul{X}}$, there is a positive $R\in\mbb{N}$ such that $R-q^*q\in Q(\mf{g})+I(\mf{h})$.

\subsubsection{Semialgebraic sets}

We define the semialgebraic set associated to $\mf{g}$ as
$$
\mc{D}_{\mf{g}} = \set{ \ul{A}\in\mbb{S}^{n} : \forall g\in \mf{g}, \, g(\ul{A})\succeq0 }\,.
$$
We can naturally extend this notion from matrix tuples of the same order, to bounded self-adjoint operators on some Hilbert space $\mc{H}$, which make $g(\ul{A})$ psd for all $g\in \mf{g}$. 
This extension is called the \emph{operator semialgebraic set} associated to  $\mf{g}$, and we denote it as $\mc{D}_{\mf{g}}^{\infty}$.

Similarly we define the \emph{variety} associated to $\mf{h}$ as
$$
\mc{D}_{\mf{h}} = \set{ \ul{A}\in\mbb{S}^{n} : \forall h\in \mf{h}, \, h(\ul{A})=0 }\,,
$$
and the natural extension to the \emph{operator variety} $\mc{D}_{\mf{h}}^{\infty}$.

\subsubsection{Hankel matrices and the Riesz functional}

Suppose we have a truncated real valued sequence $\mbf{y} = (y_{w})_{w\in \mbf{W}_{2d}}$. For each such sequence, we define the \emph{Riesz functional}, $L_{\mbf{y}}:\mbb{R}\lr{\ul{X}}_{2d}\rightarrow\mbb{R}$ as
$L_{\mbf{y}}(q) := \sum_{w} q_{w}y_{w}
$ for $q=\sum_{w} q_{w} w\in \mbb{R}\lr{\ul{X}}_{2d}$.

Suppose further that $\mbf{y}$ satisfies
$
y_{w} = y_{w^{*}} $ for all $ w\in \mbf{W}_{2d}
$.
We associate to such $\mbf{y}$ the \emph{nc Hankel matrix} of order $d$, $\mbf{M}_{d}(\mbf{y})$, defined as
$
(\mbf{M}_{d}(\mbf{y}))_{u,v} = L_{\mbf{y}}(u^{*}v)
$,
where $u,v\in \mbf{W}_{d}$.
Given $q\in\Sym\mbb{R}\lr{\ul{X}}$, we define the \emph{localizing matrix} $\mbf{M}_{d_{q}}(q\mbf{y})$ as
$
(\mbf{M}_{d_{q}}(q\mbf{y}))_{u,v} = L_{\mbf{y}}(u^{*}qv)
$,
where now $u,v\in \mbf{W}_{d_{q}}$.

\subsection{Eigenvalue minimization}
\label{sec:app.hierarchy}

Given $f \in \Sym\mbb{R}\lr{\ul{X}}$, $\mf{g}, \mf{h}  \subset \Sym\mbb{R}\lr{\ul{X}}$, 
%Let $\mf{h}$ and $\mf{g}$ be as in \S \ref{SS-AG}. 
the minimal eigenvalue of $f$ over $\mc{D}_{\mf{g}}^{\infty}\cap \mc{D}_{\mf{h}}^{\infty}$ is given by:
\begin{equation}\label{eq:def.EG}
	\lambda_{\min}(f,\mf{g},\mf{h}) = \inf \set{ \mbf{v}^{*}f(\ul{A})\mbf{v} : \ul{A}\in \mc{D}_{\mf{g}}^{\infty}\cap \mc{D}_{\mf{h}}^{\infty}, \, \norm{\mbf{v}}_2=1 }.
	%\tag{EP}
\end{equation}
We will assume that {\emph{the eigenvalue minimization problem} (EG)} 
\eqref{eq:def.EG} has at least one global minimizer. 
We can approximate the solution of EG 
\eqref{eq:def.EG} from below with a hierarchy of converging SOHS relaxations \cite{pironio2010convergent},  indexed by $k\in\mbb{N}$:
\begin{equation*}\label{eq:sos.hierarchy.0}
	\rho_{k}:=\sup\set{\xi\in\mbb{R} : f-\xi\in Q_{k}(\mf{g})+I_{k}(\mf{h})}\,.
\end{equation*}
Each relaxation gives rise to the following SDP
\begin{equation}\label{eq:sos.hierarchy}
	\rho_{k} = \sup \limits_{\xi,\mbf{G}_{i},\mbf{H}_{j}} \set{ \xi\ \left|
	\begin{aligned}
		f-\xi&=\sum_{i=0}^{m} \tr \left( \mbf{G}_{i}  \mbf{W}_{k_{g_{i}}}  g_{i} \mbf{W}_{k_{g_{i}}}^{*} \right)\\
		&\phantom{=}+ \sum_{j=1}^{\ell}  \tr \left( \mbf{H}_{j}\mbf{W}_{k_{h_{j}}} h_j\mbf{W}_{k_{h_{j}}}^{*} \right),\\
		\mbf{H}_{j}&\in\mbb{S}, \text{ and }  \mbf{G}_{i}\succeq 0
	\end{aligned}\right. }\,.
\end{equation}

%This hierarchy can be written and solved as an SDP. 
Our primary interest is in the dual formulation of this SDP, which can be stated as
\begin{equation}\label{eq:moment.hierarchy}
	\tau_{k}:=\inf \limits_{\mbf{y} \in {\mbb{R}^{\s({2k})} }} 
	\set{ L_{\mbf{y}}(f) 
		\left|
		\begin{aligned}
			& y_{1}=1, \mbf{M}_{k}(\mbf{y}) \succeq 0,  \\
			&\mbf{M}_{k_{g_{i}}  }(g_{i}\mbf{y})   \succeq 0, i\in[m],\\
			&\mbf{M}_{k_{h_{j}} }(h_{j} \mbf{y})   = 0 , j\in[\ell]
		\end{aligned}
		\right.
	}\,.
\end{equation}
Let 
$$k_{\min}:=\max\{\ceil{f},\ceil{g_i},\ceil{h_j}:i\in[m],j\in[\ell]\}\,.$$
When $Q(\mf{g})+I(\mf{h})$ is Archimedean, both $(\rho_{k})_{k\in\N^{\geq k_{\min}}}$ and $(\tau_{k})_{k\in\N^{\geq k_{\min}}}$ converge to $\lambda_{\min}(f,\mf{g},\mf{h})$ due to {an} nc analog of Putinar's Positivstellensatz \cite{helton2}.

\subsection{Trace minimization}

Let $f, \mf{g}, \mf{h}$ be as above. The minimal trace of $f$ over $\mc{D}_{\mf{g}}\cap\mc{D}_{\mf{h}}$ is 
\begin{equation}\label{eq:trmin}
    \normtr_{\min}(f, \mf{g}, \mf{h}) = \inf \set{ \normtr(f(\ul{A})) : \ul{A}\in \mc{D}_{\mf{g}}\cap\mc{D}_{\mf{h}} } \,.
\end{equation}
For trace optimization, we need some additional definitions that capture the specific properties of the $\normtr$ operator. 

Let us start first, with \emph{cyclic equivalence}. Given two polynomials $p,q\in\mbb{R}\lr{\ul{X}}$, we say that $p$ is cyclically equivalent to $q$ if $p-q$ is a sum of commutators, i.e., $p-q = \sum_{i=1}^{k}(u_{i}v_{i}-v_{i}u_{i})$ for some $k\in\mbb{N}$ and $u_{i}, v_{i}\in\mbb{R}\lr{\ul{X}}$, and we write $\displaystyle{p\overset{\cyc}{\sim} q}$. One can now define the \emph{cyclic quadratic module} $Q^{\cyc}(\mf{g})$, as the set of all polynomials $f\in\Sym\mbb{R}\lr{\ul{X}}$ which are cyclically equivalent to some {element of} %elements from
$Q(\mf{g})$ (see \cite[Definition 1.56]{burgdorf2016optimization}). 

%Using cyclic equivalence, we can define a tracial variant of the quadratic module.
%\begin{definition}
%Let $\mf{g}\subseteq\Sym\mbb{R}\lr{\ul{X}}$, with quadratic module $Q(\mf{g})$ and truncation $Q_{k}(\mf{g})$. Set
%\begin{equation}
%    \begin{gathered}
%        Q^{\cyc}_{k}(\mf{g}) := \set{ f\in\Sym\mbb{R}\lr{\ul{X}} : \exists p\in Q_{k}(\mf{g}) \text{ with }  \displaystyle{f\overset{\cyc}{\sim} p} }\\
%        Q^{\cyc}(\mf{g}) := \bigcup_{k\in\mbb{N}} Q^{\cyc}_{k}(\mf{g}).
%    \end{gathered}
%\end{equation}
%$Q^{\cyc}(\mf{g})$ is called the cyclic quadratic module generated by $\mf{g}$, and $Q^{\cyc}_{k}(\mf{g})$ it's $k^{th}$-order truncation (the cyclic ideal, $I^{\cyc}(\mf{h})$, and it's truncation, $I^{\cyc}_{k}(\mf{h})$ are defined similarly.).
%\end{definition}

We cannot in general work with the sets $\mc{D}_{\mf{g}}^{\infty}, \mc{D}_{\mf{h}}^{\infty}$, since the algebra of bounded operators over a Hilbert space $\mc{H}$ does not admit a trace if $\mc{H}$ is infinite dimensional. Instead we restrict to a certain subset of finite von Neumann algebras of type $\I$ and $\II$, a subset of the algebra of bounded operators on $\mc{H}$, and we denote this by $\mc{D}_{\mf{g}}^{\II_{1}}$. 
Then we consider the following relaxation of \eqref{eq:trmin}:
\begin{equation}\label{eq:trmintwoone}
    \normtr_{\min}^{\II_1}(f, \mf{g}, \mf{h}) = \inf \set{ \normtr(f(\ul{A})) : \ul{A}\in \mc{D}_{\mf{g}}^{\II_{1}} \cap \mc{D}_{\mf{h}}^{\II_{1}} } \,.
\end{equation}
A discussion of von Neumann algebras is beyond the scope of this article, and we refer the reader to \cite[Definition 1.59]{burgdorf2016optimization} for more details. 
%Instead we will consider subsets of $\mc{D}_{\mf{g}}^{\infty}$ and $\mc{D}_{\mf{g}}^{\infty}$ as defined below ({The \emph{von Neumann variety}, $\mc{D}_{\mf{h}}^{II_{1}}$ is defined similarly.}).
%\begin{definition}
%Let $\mc{F}$ be a type $II_{1}$-von Neumann algebra, and $\mc{D}_{\mf{g}}^{\mc{F}}$ be the $\mc{F}$\emph{-semialgebraic set} generated by $\mf{g}$; it consists of all tuples $\ul{A} = (A_{1}, \dotsc, A_{n})\in\mc{F}^{n}$ such that $g(\ul{A})$ is psd, for all $g\in\mf{g}$. Then the \emph{von Neumann semialgebraic set} generated by $\mf{g}$ is defined as
%\begin{equation}
%    \mc{D}_{\mf{g}}^{II_{1}} := %\bigcup_{\mc{F}} \mc{D}_{\mf{g}}^{\mc{F}}
%\end{equation}
%where the union is taken over all type $II_{1}$-von Neumann algebras $\mc{F}$ with seperable preduals. 
%\end{definition}
%
%\begin{definition}
%A polynomial $f\in\Sym\mbb{R}\lr{\ul{X}}$ is called trace positive on $\mc{D}_{\mf{g}}^{II_{1}}$ if 
%$\normtr(f(\ul(A)))\geq 0$ for all $\ul{A}\in\mc{D}_{\mf{g}}^{II_{1}}$, where $\normtr$ now represents the canonical trace function in the corresponding finite von Neumann algebra.
%\end{definition}
%Thanks to the inclusion $\mc{D}_{\mf{g}}\subseteq \mc{D}_{\mf{g}}^{II_{1}}\subseteq \mc{D}_{\mf{g}}^{\infty}$, we can safely search for solutions of \eqref{eq:trmin} over the von Neumann semialgebraic set.
%
An SOHS relaxation hierarchy, indexed by $k\in\mbb{N}$, for \eqref{eq:trmintwoone} can be written as
\begin{equation}\label{eq:tr.SOS.hierarchy}
	\rho^{\normtr}_{k}:=\sup\set{ \xi\in\mbb{R} : f-\xi\in Q^{\cyc}_{k}(\mf{g})+I^{\cyc}_{k}(\mf{h}) }
\end{equation}
which once again, can be written and solved as an SDP. The dual formulation of this SDP, which is our primary interest, is
\begin{equation}\label{eq:tr.SDP.dual}
    \tau^{\normtr}_{k}:=\inf \limits_{\mbf{y} \in {\mbb{R}^{\s({2k})} }} 
	\set{ L_{\mbf{y}}(f) 
		\left|
		\begin{aligned}
			& y_{1}=1, \text{ and }  y_{u}=y_{v} \text{ if } \displaystyle{u\overset{\cyc}{\sim} v},  \\
			& \mbf{M}_{k}(\mbf{y}) \succeq 0,\\
			&\mbf{M}_{k_{g_{i}}  }(g_{i}\mbf{y})   \succeq 0, i\in[m],\\
			&\mbf{M}_{k_{h_{j}} }(h_{j} \mbf{y})   = 0 , j\in[\ell]
		\end{aligned}
		\right.
	}.
\end{equation}
%Compared to the moment relaxation \eqref{eq:moment.hierarchy} for eigenvalue optimization, \eqref{eq:tr.SDP.dual} has several additional cyclic equality constraints.
{Compared to the relaxation \eqref{eq:moment.hierarchy} for EG, \eqref{eq:tr.SDP.dual} has several additional linear constraints arising from cyclic equivalences.} 
Under Archimedeanity of $Q(\mf{g})$, $(\rho^{\normtr}_{k})_{k\in\mbb{N}^{\geq k_{\min}}}$ is monotonically increasing, and converges to $\normtr_{\min}^{\II_1}(f,\mf{g},\mf{h})$, see \cite[Corollary {5.5}]{burgdorf2016optimization}. %However, in general, the convergence is not finite.

\section{CTP for NC optimization}
\label{sec:ctp.dense.EP}

In this section we develop a framework which exploits {CTP for NCPOPs. Our results below hold for both eigenvalue \eqref{eq:def.EG} and trace \eqref{eq:trmintwoone} minimization hierarchies \eqref{eq:moment.hierarchy} and \eqref{eq:tr.SDP.dual} respectively. } 
%in the hierarchies \eqref{eq:moment.hierarchy} {and \eqref{eq:tr.SDP.dual}, for} solving {nc optimization problems} \eqref{eq:def.EG} {and \eqref{eq:trmin} respectively}.
We provide sufficient conditions under which CTP is guaranteed, as well as simple linear programming methods to check these conditions. We conclude by examining some special cases.

\subsection{CTP for {Dual Hierarchies}}
\label{sec:spectral.relax}

We first give a precise definition of CTP for {NCPOP}. %EG \eqref{eq:def.EG}. 
Recall the sets $\mf{g}$ and $\mf{h}$ from \S \ref{sec:defs}. For every $k\in\mbb{N}^{\geq k_{\min}}$, define $\s_{k} :=\sum_{i=0}^{m}\s(k_{g_{i}})$%-\ceil{g_{i}})$
, and the set $\mc{S}^{(k)}\subseteq\mbb{S}^{\s_{k}}$ as
$$
\mc{S}^{(k)} := \set{ \mbf{Y}\in\mbb{S}^{\s_{k}} : \mbf{Y}=\diag(\mbf{Y}_{0},\dotsc,\mbf{Y}_{m}), \text{ and each } \mbf{Y}_{i}\in\mbb{S}^{\s(k_{g_{i}})}}.
$$

Letting $\mbf{D}_k(\mbf{y}):=\diag(\mbf{M}_{k}(\mbf{y}),\mbf{M}_{k_{g_{1}}}(g_{1}\mbf{y}),\dotsc, \mbf{M}_{k_{g_{m}} }(g_{m}\mbf{y}))$, 
SDP \eqref{eq:moment.hierarchy} can be rewritten {as}
\begin{equation}\label{eq:dual.diag.moment.mat}
	\tau_{k} = \inf\limits_{\mbf{y} \in \mbb{R}^{\s(2k)} }
	\set{  
		L_{\mbf{y}}(f) 
		\left|
		\begin{aligned}
			&y_{1}=1, \mbf{D}_{k}(\mbf{y}) \in \mc{S}^{(k)}_+,\\
			&\mbf{M}_{k_{h_{j}}}(h_{j}\mbf{y})=0,j\in[\ell]
		\end{aligned}
		\right. 
	}
\end{equation}
{ and we can similarly reformulate SDP \eqref{eq:tr.SDP.dual} to 
\begin{equation}\label{eq:tr.SDP.diag}
	\tau^{\normtr}_{k} := \inf\limits_{\mbf{y} \in \mbb{R}^{\s(2k)} }
	\set{  
		L_{\mbf{y}}(f) 
		\left|
		\begin{aligned}
			& y_{1}=1, \text{ and }  y_{u}=y_{v} \text{ if } \displaystyle{u\overset{\cyc}{\sim} v},  \\ &\mbf{D}_{k}(\mbf{y}) \in \mc{S}^{(k)}_+,\\	&\mbf{M}_{k_{h_{j}}}(h_{j}\mbf{y})=0,j\in[\ell]
		\end{aligned}
		\right. 
	}.
\end{equation}
}
%{\color{blue} I think it's not hard for readers to understand CTP-framwork for trace optimization if they understand CTP-framwork for eigenvalue optimization. I have explained in the conclusion that the additional cyclic equality constraints in the moment relaxations does not change any result from CTP-framwork for eigenvalue optimization.
%In my optinion, we don't need describe CTP-framework for both eigenvalue and trace optimization in parallel.
%Let's wait for Victor's idea about this.}
\begin{definition}[CTP]\label{def:ctp}
We say that {an NCPOP} %EG \eqref{eq:def.EG} 
has CTP if for every $k\in \mbb{N}^{\geq k_{\min}}$, there exists $a_{k}>0$ and $\mbf{P}_{k}\in \mc{S}^{(k)}_{++}$ such that for all $\mbf{y} \in \mbb{R}^{\s(2k)}$,
\begin{equation*}
    \left.
	\begin{array}{rl}
		&\mbf{M}_{k_{h_{j}}}(h_{j}\mbf{y})=0, j\in[\ell],\\
		&y_{1}=1
	\end{array}
	\right\}
	\Rightarrow  
	\tr(\mbf{P}_{k}^{*} \mbf{D}_{k}(\mbf{y}) \mbf{P}_{k})=a_{k}.
\end{equation*}
\end{definition}
In other words, we say that {NCPOP \eqref{eq:def.EG} or \eqref{eq:trmintwoone}} %EG \eqref{eq:def.EG} 
has CTP if each {dual SDP} relaxation {\eqref{eq:dual.diag.moment.mat} or \eqref{eq:tr.SDP.diag}} has an equivalent form involving a psd matrix whose trace is constant. In this case, $a_{k}$ is the constant trace and $\mbf{P}_{k}$ is the change of basis matrix. 
%In the next subsection, we provide a sufficient condition for EG \eqref{eq:def.EG} to have CTP. 
The next proposition is an example of an {NCPOP} %EG \eqref{eq:def.EG}
which has CTP.
\begin{proposition}[nc polydis{c} equality]
\label{pro:dense.ctp}
	Let $m=0$, $\ell\geq n$ and $h_{j}=X_{j}^{2}-1$, for $j\in[n]$. 
	Then 
	\begin{equation}
   		\left.
		\begin{array}{rl}
			&\mbf{M}_{ k_{ h_{j} } }(h_{j}\mbf{y})=0, j\in[\ell],\\
			&y_{1}=1
		\end{array}
		\right\}
		\Rightarrow  \tr\left(\mbf{D}_{k}(\mbf{y})\right)=\s(k).
	\end{equation}
\end{proposition}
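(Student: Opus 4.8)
The first thing I would do is reduce the statement to a claim about individual moments $y_{u^{*}u}$. Since $m=0$ the family $\mf{g}$ is just $\set{g_{0}}=\set{1}$, so $k_{g_{0}}=k$, $\s_{k}=\s(k)$, the set $\mc{S}^{(k)}$ has a single block, and $\mbf{D}_{k}(\mbf{y})=\mbf{M}_{k}(\mbf{y})\in\mbb{S}^{\s(k)}$. Reading off the diagonal of the nc Hankel matrix,
\[
\tr(\mbf{D}_{k}(\mbf{y}))=\sum_{u\in\mbf{W}_{k}}(\mbf{M}_{k}(\mbf{y}))_{u,u}=\sum_{u\in\mbf{W}_{k}}L_{\mbf{y}}(u^{*}u)=\sum_{u\in\mbf{W}_{k}}y_{u^{*}u},
\]
and since $\abs{\mbf{W}_{k}}=\s(k)$, it suffices to show that $y_{u^{*}u}=1$ for every word $u\in\mbf{W}_{k}$.

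Next I would extract the single scalar identity I need from the hypotheses. As $\deg h_{j}=2$ we have $k_{h_{j}}=k-1$ (a legitimate index because $k\ge k_{\min}\ge\ceil{h_{j}}=1$), so the equation $\mbf{M}_{k_{h_{j}}}(h_{j}\mbf{y})=0$ holds entry by entry; in particular its diagonal entries indexed by $w\in\mbf{W}_{k-1}$ give
\[
0=(\mbf{M}_{k-1}(h_{j}\mbf{y}))_{w,w}=L_{\mbf{y}}(w^{*}(X_{j}^{2}-1)w)=y_{w^{*}X_{j}^{2}w}-y_{w^{*}w},
\]
hence $y_{w^{*}X_{j}^{2}w}=y_{w^{*}w}$ for all $j\in[n]$ and all $w\in\mbf{W}_{k-1}$. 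Note that only the diagonals of the localizing matrices and the normalization $y_{1}=1$ enter; positivity of $\mbf{M}_{k}(\mbf{y})$ is not used.

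I would then prove $y_{u^{*}u}=1$ for $u\in\mbf{W}_{k}$ by induction on $\deg(u)$. If $\deg(u)=0$ then $u$ is the empty word and $y_{u^{*}u}=y_{1}=1$. If $\deg(u)=r\ge1$, write $u=X_{i_{1}}u'$ with $u'$ a word of degree $r-1$, so $u'\in\mbf{W}_{k-1}$; since the involution fixes each letter, $u^{*}=u'^{*}X_{i_{1}}$ and therefore $u^{*}u=u'^{*}X_{i_{1}}^{2}u'$. Applying the identity above with $j=i_{1}$ and $w=u'$ yields $y_{u^{*}u}=y_{u'^{*}u'}$, which is $1$ by the inductive hypothesis. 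Summing over $u\in\mbf{W}_{k}$ then gives $\tr(\mbf{D}_{k}(\mbf{y}))=\s(k)$.

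I do not anticipate a genuine obstacle: the core is a one-line induction on word length. The only things needing care are bookkeeping ones — applying the involution correctly so that the innermost factor of $u^{*}u$ is exactly $X_{i_{1}}^{2}$, and checking that deleting one letter keeps the remaining word $u'$ inside $\mbf{W}_{k-1}$, which is precisely the index range for which $\mbf{M}_{k_{h_{i_{1}}}}(h_{i_{1}}\mbf{y})=0$ carries information. The extra generators $h_{j}$ with $n<j\le\ell$, if present, are never used.
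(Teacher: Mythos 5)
Your proposal is correct and follows essentially the same route as the paper: reduce $\tr(\mbf{D}_{k}(\mbf{y}))$ to $\sum_{u\in\mbf{W}_{k}}y_{u^{*}u}$, read off $L_{\mbf{y}}(w^{*}(X_{j}^{2}-1)w)=0$ from the diagonals of the localizing matrices, and strip the innermost squared letter repeatedly until $y_{u^{*}u}$ collapses to $y_{1}=1$. The paper writes this telescoping as an explicit chain of equalities rather than a formal induction on word length, but the argument is identical.
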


\begin{proof}
	Note that $\mbf{D}_{k}(\mbf{y})=\mbf{M}_{k}(\mbf{y})$  since $\mf{g}=\{1\}$.
	Suppose that $\mbf{M}_{k_{h_{j}}}(h_{j}\mbf{y})=0$, {$j\in[\ell]$,} %$j\in[n]$ 
	and $y_1=1$. This implies that for every $j\in[n]$, the diagonal of $\mbf{M}_{k_{h_{j}}}(h_{j}\mbf{y})$ is zeros, i.e., $L_{\mbf{y}}(u^{*}(X_{j}^{2}-1)u)=0$, for all $u\in \mbf{W}_{k-1}$. This now implies, for every $w=X_{i_1}\dots X_{i_r}\in \mbf{W}_{k}$
	\begin{equation*}
		\begin{aligned}
			y_{w^{*}w} & = L_{\mbf{y}}(w^{*}w)= L_{\mbf{y}}(X_{i_{r}}\dotso X_{i_{1}} X_{i_{1}}\dotso X_{i_{r}})\\
			&=L_{\mbf{y}}(X_{i_{r}}\dotso X_{i_{2}}(X_{i_{1}}^{2}-1)X_{i_{2}}\dotso X_{i_{r}})\\
			&\phantom{==}+L_{\mbf{y}}(X_{i_{r}}\dotso X_{i_{2}}X_{i_{2}}\dotso X_{i_{r}})\\
			& = L_{\mbf{y}}(X_{i_{r}}\dotso X_{i_{2}}X_{i_{2}}\dotso X_{i_{r}})\\
			& = \dotsb = L_{\mbf{y}}(X_{i_{r}}X_{i_{r}})= 	L_{\mbf{y}}(X_{i_{r}}^{2}-1)+L_{\mbf{y}}(1)=y_{1}=1.
		\end{aligned}
	\end{equation*}	
	This yields  $\tr(\mbf{M}_{k}(\mbf{y}))=\sum_{w\in \mbf{W}_{k}} y_{w^{*}w}= \s(k)$.
\end{proof}

A general solution method for solving {NCPOPs} %EG \eqref{eq:def.EG} 
which satisfy CTP can be described as follows. We first convert the $k$-th order relaxation \eqref{eq:dual.diag.moment.mat} {or \eqref{eq:tr.SDP.diag}} to a standard (primal) SDP with CTP and then leverage appropriate first-order algorithms, such as CGAL \cite{yurtsever2019conditional} or \emph{spectral method} (SM) \cite[Appendix A.3]{mai2020exploiting}, which exploit CTP to solve the SDP.

For a detailed exposition on how the SDP \eqref{eq:dual.diag.moment.mat} {or \eqref{eq:tr.SDP.diag} can be converted} to a standard (primal) form, the reader is invited to consult \cite{mai2020hierarchy}. There one will also find explanations of how the primal and dual forms of the SDP are related, and their use with CGAL/SM.

\subsection{Sufficient condition to have CTP}
\label{sec:sufficient.CTP}
We now provide a sufficient condition for {NCPOP} %EG \eqref{eq:def.EG} 
to satisfy CTP.
For $k\in\mbb{N}^{\geq k_{\min}}$, let $Q_{k}^\circ(\mf{g})$ be the interior of the truncated quadratic module $Q_{k}(\mf{g})$, i.e.,
\begin{equation*}
    Q_{k}^\circ(\mf{g}):=\set{\sum_{i=0}^{m}  \tr(\mbf{G}_{i}\mbf{W}_{k_{g_{i}}} g_{i}\mbf{W}_{k_{g_{i}}}^{*}) : \mbf{G}_{i}\succ 0 }.
\end{equation*}
%We have the following theorem.
\begin{theorem}\label{theo:suff.cond.CTP}
Suppose that for every $k\in\mbb{N}$, the following inclusion holds:
\begin{equation}\label{eq:suffi.con.ideal}
	\mbb{R}^{>0} \subset Q_{k}^\circ(\mf{g}).
\end{equation}
Then {NCPOP \eqref{eq:def.EG} and \eqref{eq:trmin}} %EG \eqref{eq:def.EG} 
satisfy CTP.
\end{theorem}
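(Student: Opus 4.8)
The plan is to exploit the hypothesis \eqref{eq:suffi.con.ideal} to produce, for each $k$, a strictly positive combination of the generators whose associated matrix gives the change of basis. Concretely, since $1 \in \mbb{R}^{>0} \subset Q_k^\circ(\mf{g})$, there exist matrices $\mbf{G}_i \succ 0$ with $1 = \sum_{i=0}^m \tr(\mbf{G}_i \mbf{W}_{k_{g_i}} g_i \mbf{W}_{k_{g_i}}^*)$. The idea is that this identity, once paired with the Riesz functional $L_{\mbf{y}}$ and the defining relations of the localizing matrices, forces $\sum_i \tr(\mbf{G}_i \mbf{M}_{k_{g_i}}(g_i\mbf{y}))$ to be constant on the feasible set, and this expression is exactly (up to rearranging into the block-diagonal form) $\tr(\mbf{P}_k^* \mbf{D}_k(\mbf{y}) \mbf{P}_k)$ for a suitable $\mbf{P}_k \in \mc{S}^{(k)}_{++}$.

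First I would record the elementary linearity fact that for any $q \in \Sym\mbb{R}\lr{\ul{X}}$ and any symmetric matrix $\mbf{G}$ of the right size, $L_{\mbf{y}}\bigl(\tr(\mbf{G}\, \mbf{W}_{k_q} q \mbf{W}_{k_q}^*)\bigr) = \tr(\mbf{G}\, \mbf{M}_{k_q}(q\mbf{y}))$, which follows directly from the definition $(\mbf{M}_{k_q}(q\mbf{y}))_{u,v} = L_{\mbf{y}}(u^* q v)$ and the fact that $\tr(\mbf{G}\,\mbf{W}_{k_q} q \mbf{W}_{k_q}^*) = \sum_{u,v} G_{u,v}\, u^* q v$. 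Applying $L_{\mbf{y}}$ to the identity $1 = \sum_{i=0}^m \tr(\mbf{G}_i \mbf{W}_{k_{g_i}} g_i \mbf{W}_{k_{g_i}}^*)$ and using $L_{\mbf{y}}(1) = y_1 = 1$ then gives
\begin{equation*}
	1 = \sum_{i=0}^m \tr\bigl(\mbf{G}_i\, \mbf{M}_{k_{g_i}}(g_i\mbf{y})\bigr)
\end{equation*}
for every feasible $\mbf{y}$.

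Next I would assemble the block-diagonal change of basis: set $\mbf{P}_k := \diag(\mbf{G}_0^{1/2}, \dotsc, \mbf{G}_m^{1/2})$, which lies in $\mc{S}^{(k)}_{++}$ since each $\mbf{G}_i \succ 0$. Because $\mbf{D}_k(\mbf{y}) = \diag\bigl(\mbf{M}_k(\mbf{y}), \mbf{M}_{k_{g_1}}(g_1\mbf{y}), \dotsc, \mbf{M}_{k_{g_m}}(g_m\mbf{y})\bigr)$ and $g_0 = 1$ so that $\mbf{M}_{k_{g_0}}(g_0\mbf{y}) = \mbf{M}_k(\mbf{y})$, the cyclicity of trace gives $\tr(\mbf{P}_k^* \mbf{D}_k(\mbf{y}) \mbf{P}_k) = \sum_{i=0}^m \tr(\mbf{G}_i \mbf{M}_{k_{g_i}}(g_i\mbf{y})) = 1$, so CTP holds with $a_k = 1$. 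For the trace-minimization hierarchy the only difference is the presence of the extra cyclic-equivalence constraints $y_u = y_v$, but these merely restrict the feasible set further, so the same identity still holds; I would note this explicitly so the statement covers both \eqref{eq:def.EG} and \eqref{eq:trmin}. One technical point to handle is that \eqref{eq:suffi.con.ideal} is assumed for every $k \in \mbb{N}$, whereas CTP is required for every $k \in \mbb{N}^{\geq k_{\min}}$, so no extra care is needed there — the range of $k$ in the hypothesis is at least as large as what is needed.

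The main obstacle, such as it is, is bookkeeping rather than depth: one must be careful that the matrices $\mbf{G}_i$ coming from membership in $Q_k^\circ(\mf{g})$ are indexed by $\mbf{W}_{k_{g_i}}$ (the right dimensions to match the blocks of $\mbf{D}_k(\mbf{y})$), and that the identity $1 = \sum_i \tr(\mbf{G}_i \mbf{W}_{k_{g_i}} g_i \mbf{W}_{k_{g_i}}^*)$ is an identity in the free algebra, so that applying the \emph{linear} functional $L_{\mbf{y}}$ is legitimate without any positivity assumption on $\mbf{y}$ — only $y_1 = 1$ and (for the trace case) the cyclic relations are used. A secondary subtlety: the ideal constraints $\mbf{M}_{k_{h_j}}(h_j\mbf{y}) = 0$ from Definition \ref{def:ctp} do not actually enter the argument here, which is consistent with the theorem not mentioning $\mf{h}$ in its hypothesis; I would simply observe that they are harmless (feasibility only shrinks the set). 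Finally, I would remark that the argument is exactly the nc analog of the corresponding commutative result in \cite{mai2020exploiting,mai2020hierarchy}, replacing moment matrices over monomials by nc Hankel matrices over words.
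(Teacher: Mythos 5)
Your proposal is correct and follows essentially the same route as the paper's proof: both take a positive constant in $Q_k^\circ(\mf{g})$, write it as $\sum_{i=0}^m \tr(\mbf{G}_i \mbf{W}_{k_{g_i}} g_i \mbf{W}_{k_{g_i}}^*)$ with $\mbf{G}_i \succ 0$, apply the Riesz functional using $y_1=1$, and set $\mbf{P}_k = \diag(\mbf{G}_0^{1/2},\dotsc,\mbf{G}_m^{1/2})$ to get the constant trace. The only cosmetic difference is that you fix $a_k=1$ whereas the paper keeps a generic $a_k>0$; your extra remarks on the linearity of $L_{\mbf{y}}$, the irrelevance of the $\mf{h}$ constraints, and the trace-minimization case are all consistent with the paper.
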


\begin{proof}
	Let $k\in\mbb{N}^{\ge k_{\min}}$ and $a_k>0$ such that $a_k\in  Q_{k}^\circ(\mf{g})$. Then we can write
	\if{
	\begin{equation}\label{eq:rep}
		a_k=\sum_{i=0}^m  \tr\left(\mbf{G}_i\mbf{W}_{k_{g_i}}g_{i}  \mbf{W}_{k_{g_i}}^{*}\right)+\sum_{j=1}^{\ell}\tr\left( \mbf{H}_j\mbf{W}_{k_{h_j}}h_{j} \mbf{W}_{k_{h_j}}^{*}\right),
	\end{equation}
	}\fi
		\begin{equation}\label{eq:rep}
		a_k=\sum_{i=0}^m  \tr\left(\mbf{G}_i\mbf{W}_{k_{g_i}}g_{i}  \mbf{W}_{k_{g_i}}^{*}\right),
	\end{equation}
	with each $\mbf{G}_{i}\in\mbb{S}^{++}$. %, and each $\mbf{H}_{j}\in\mbb{S}$.
	We denote by $\mbf{G}_{i}^{1/2}$
	the square root of $\mbf{G}_{i}$.
	Set $\mbf{P}_{k}=\diag(\mbf{G}_{0}^{1/2},\dotso,\mbf{G}_{m}^{1/2})$.
	\if{
	Let $\mbf{y}\in\mbb{R}^{\s(2k)}$ such that $\mbf{M}_{k_{h_{j}}}(h_{j}\mbf{y})=0$, for $j\in[\ell]$, and $ y_{1}=1$.
	Then 
	\begin{gather*}
		\begin{aligned}
	L_{\mbf{y}}\left(\sum_{j=1}^{\ell}\tr \left( \mbf{H}_{j} \mbf{W}_{k_{h_{j}}} h_{j} \mbf {W}_{k_{h_j}}^{*} \right)\right)
			=\sum_{j=1}^{\ell}\tr \left( \mbf{H}_{j}\mbf{M}_{k_{h_j}}(\mbf{y})\right)=0.	
		\end{aligned}
	\end{gather*}
	}\fi
	From this and \eqref{eq:rep},
	\begin{equation*}
	\begin{array}{rl}
			a_k& =L_{\mbf{y}} \left( \sum_{i=0}^{m}  \tr \left( \mbf{G}_{i}\mbf{W}_{k_{g_{i}}} g_{i}  \mbf{W}_{k_{g_{i}}}^{*} \right)\right) =\sum_{i=0}^{m}\tr \left(\mbf{G}_{i}\mbf{M}_{k_{g_{i}}} (g_{i}\mbf{y}) \right)\\
			& =\sum_{i=0}^{m} \tr \left( \mbf{G}_{i}^{1/2}\mbf{M}_{k_{g_{i}}}(g_{i}\mbf{y})\mbf{G}_{i}^{1/2} \right) =\tr \left( \mbf{P}_{k} \mbf{D}_{k}(\mbf{y})\mbf{P}_{k}\right).
	\end{array}
	\end{equation*}
%	proving the result.
\end{proof}

\ext{
\begin{theorem}
Conversely, suppose that NCPOP \eqref{eq:def.EG} or \eqref{eq:trmintwoone} satisfies CTP, and that $\mc{D}^{\circ}_{\mf{g}}\neq \emptyset$, then 
\begin{equation}
\mbb{R}^{>0} \subset Q_{k}^{\circ}(\mf{g})
\end{equation}
\end{theorem}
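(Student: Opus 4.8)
The plan is to run the argument for Theorem~\ref{theo:suff.cond.CTP} in reverse: CTP says the constant $a_k$ equals $L_{\mbf{y}}$ of a single element of $Q_k^\circ(\mf{g})$ on the \emph{entire} feasible affine set, and linear-algebraic duality turns this into an algebraic identity. Fix $k\in\mbb{N}^{\ge k_{\min}}$ and take $a_k>0$, $\mbf{P}_k=\diag(\mbf{P}_0,\dotsc,\mbf{P}_m)\in\mc{S}^{(k)}_{++}$ as in Definition~\ref{def:ctp}. Put $\mbf{G}_i:=\mbf{P}_i^2\succ0$ and $\sigma:=\sum_{i=0}^m\tr(\mbf{G}_i\mbf{W}_{k_{g_i}}g_i\mbf{W}_{k_{g_i}}^*)$, so $\sigma\in Q_k^\circ(\mf{g})$ by definition. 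Exactly the computation in the proof of Theorem~\ref{theo:suff.cond.CTP} (block-diagonality of $\mbf{P}_k$ and $\mbf{D}_k(\mbf{y})$, together with $\tr(\mbf{G}_i\mbf{M}_{k_{g_i}}(g_i\mbf{y}))=L_{\mbf{y}}(\tr(\mbf{G}_i\mbf{W}_{k_{g_i}}g_i\mbf{W}_{k_{g_i}}^*))$) shows $\tr(\mbf{P}_k^*\mbf{D}_k(\mbf{y})\mbf{P}_k)=L_{\mbf{y}}(\sigma)$, so CTP reads: $L_{\mbf{y}}(\sigma)=a_k$ for every $\mbf{y}$ in the affine set $\mc{A}:=\set{\mbf{y}\in\mbb{R}^{\s(2k)} : y_1=1,\ \mbf{M}_{k_{h_j}}(h_j\mbf{y})=0,\ j\in[\ell]}$, which we may assume nonempty (otherwise CTP is vacuous).

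Next I would regard the functional $\mbf{y}\mapsto L_{\mbf{y}}(\sigma-a_k\cdot 1)$ as pairing $\mbf{y}$ against the coefficient vector of $\sigma-a_k\cdot 1$; since $y_1=1$ on $\mc{A}$, this functional vanishes identically on $\mc{A}$, hence on the linear span of $\mc{A}-\mc{A}$. As $\mc{A}$ is cut out of $\mbb{R}^{\s(2k)}$ by the equation $y_1=1$ together with the equations $L_{\mbf{y}}(u^*h_jv)=0$ for $u,v\in\mbf{W}_{k_{h_j}}$, $j\in[\ell]$, it follows that the coefficient vector of $\sigma-a_k\cdot 1$ lies in the span of the corresponding functionals, i.e.\ $\sigma-a_k\cdot 1=\beta\cdot 1+\sum_{j=1}^\ell\sum_{u,v}c^{(j)}_{u,v}\,u^*h_jv$ for some scalars $\beta$, $c^{(j)}_{u,v}$; evaluating at a point of $\mc{A}$ forces $\beta=0$. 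Because $\sigma$ and $a_k\cdot 1$ are symmetric, symmetrizing the coefficients rewrites the remaining sum as some $p=\sum_{j=1}^\ell\tr(\mbf{H}_j\mbf{W}_{k_{h_j}}h_j\mbf{W}_{k_{h_j}}^*)\in I_k(\mf{h})$ with $\mbf{H}_j\in\mbb{S}$. Hence $a_k\cdot 1=\sigma-p$ with $\sigma\in Q_k^\circ(\mf{g})$ and $p\in I_k(\mf{h})$; since $Q_k^\circ(\mf{g})$ is stable under multiplication by positive scalars, the same identity holds with $a_k$ replaced by any positive real, giving $\mbb{R}^{>0}\subseteq Q_k^\circ(\mf{g})+I_k(\mf{h})$ for every $k\ge k_{\min}$. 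When $\ell=0$ this is exactly $\mbb{R}^{>0}\subset Q_k^\circ(\mf{g})$ (and the $\ell=0$ case uses nothing beyond CTP); the hypothesis $\mc{D}^\circ_{\mf{g}}\neq\emptyset$ serves only as a non-degeneracy condition, e.g.\ to ensure $Q_k(\mf{g})$ contains no nonpositive constant, consistent with $a_k>0$.

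I expect the genuine obstacle to be the ideal term $p\in I_k(\mf{h})$ when $\ell>0$, since an arbitrary element of $I_k(\mf{h})$ cannot in general be absorbed into $Q_k^\circ(\mf{g})$. For example the nc polydisc of Proposition~\ref{pro:dense.ctp} satisfies CTP and has $\mc{D}^\circ_{\mf{g}}=\mbb{S}^n\neq\emptyset$, yet no nonzero constant lies in $Q_k^\circ(\set{1})$: for $\mbf{G}_0\succ0$ the polynomial $\tr(\mbf{G}_0\mbf{W}_k\mbf{W}_k^*)$ has coefficient $(\mbf{G}_0)_{X_1^k,X_1^k}>0$ on the word $X_1^{2k}$, so it is never a nonzero constant. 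Thus the conclusion as literally written is correct for $\ell=0$; for general $\ell$ it should be read either modulo $I_k(\mf{h})$ (namely $a_k\cdot 1-\sigma\in I_k(\mf{h})$ for some $\sigma\in Q_k^\circ(\mf{g})$) or with $Q_k^\circ(\mf{g})$ replaced by the relative interior of $Q_k(\mf{g})+I_k(\mf{h})$ inside the degree-$2k$ subspace, and then the argument above applies verbatim.
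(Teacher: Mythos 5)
Your argument is correct and reaches the right conclusion by a genuinely different route from the paper's. The paper's proof reads the trace identity of Definition~\ref{def:ctp} as holding for \emph{all} $\mbf{y}\in\mbb{R}^{\s(2k)}$, feeds in the tracial moment sequence of a uniform measure on a small nc neighborhood of a point of $\mc{D}_{\mf{g}}^{\circ}$, and deduces the polynomial identity $a_k=q\in Q_k^{\circ}(\mf{g})$ from the vanishing of the resulting integral; the hypothesis $\mc{D}_{\mf{g}}^{\circ}\neq\emptyset$ is there precisely to supply these representing measures. You instead stay entirely at the level of linear algebra on coefficient vectors: a linear functional vanishing on the affine set cut out by $y_1=1$ and $\mbf{M}_{k_{h_j}}(h_j\mbf{y})=0$ must be a combination of the defining functionals, which (after symmetrization) yields $a_k=\sigma-p$ with $\sigma\in Q_k^{\circ}(\mf{g})$ and $p\in I_k(\mf{h})$. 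Your route buys two things. First, it never invokes $\mc{D}_{\mf{g}}^{\circ}\neq\emptyset$, and when $\ell=0$ it shows that hypothesis is superfluous: the identity $L_{\mbf{y}}(\sigma-a_k y_1)=0$ for all $\mbf{y}$ already forces $\sigma=a_k$ as polynomials, with no measure needed. Second, and more importantly, it is faithful to Definition~\ref{def:ctp}, which constrains $\mbf{y}$ only on the feasible affine set; the paper's proof silently drops the constraints $\mbf{M}_{k_{h_j}}(h_j\mbf{y})=0$, and the moment sequence of a measure supported near a point of $\mc{D}_{\mf{g}}^{\circ}$ need not satisfy them, so that argument is really only valid for $\ell=0$. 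Your polydisc example (Proposition~\ref{pro:dense.ctp} with $\mf{g}=\{1\}$, where no nonzero constant lies in $Q_k^{\circ}(\{1\})$ for $k\geq1$ because the coefficient of $X_1^{2k}$ is the strictly positive entry $(\mbf{G}_0)_{X_1^k,X_1^k}$) shows the statement as literally written is false when $\ell>0$; the corrected conclusion $\mbb{R}^{>0}\subseteq Q_k^{\circ}(\mf{g})+I_k(\mf{h})$ that you actually prove is the right converse to Theorem~\ref{theo:suff.cond.CTP} once the ideal is taken into account, and your proof of it is complete.
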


\begin{proof}
Let $k\geq \mbb{N}^{k_{\min}}$ be fixed, and suppose that NCPOP \eqref{eq:def.EG} or \eqref{eq:trmintwoone} satisfies CTP. Then by definition, there exists $P_{k} = (G_{0}, \dotsc, G_{m}) \in\mc{S}_{++}^{(k)}$ such that for all $\mbf{y}\in\mbb{R}^{\s(2k)}$ one has
\begin{align}\label{eq:poly}
    0<a_{k} & = \tr \left( P_{k}D_{k}P_{k} \right) \nonumber \\
            & = \tr \left( G_{0}^{2} M_{k}(\mbf{y}) \right) + \sum_{i\in [m]}\tr \left( G_{i}^{2} M_{k_{g_{i}}}(g_{i}\mbf{y}) \right) \nonumber \\
            & = L_{\mbf{y}} \left( \mbf{W}_{k} G_{0}^{2} \mbf{W}_{k}^{*} + \sum_{i\in[m]} \mbf{W}_{k_{g_{i}}} G_{i}^{2} \mbf{W}_{k_{g_{i}}}^{*} g_{i} \right) 
\end{align}    
Since this is true for all $\mbf{y}$, it is in particular true if $\mbf{y}$ has a representing measure. So, let $\ul{A}\in\mc{D}_{\mf{g}}^{\circ}$ and let $\theta$ be the uniform measure on an nc $\varepsilon$-neighborhood of $\ul{A}$. Let $\mbf{y}$ be the tracial moment sequence generated by $\theta$. Then, defining the polynomial
$$
q = \mbf{W}_{k} G_{0}^{2} \mbf{W}_{k}^{*} + \sum_{i\in[m]} \mbf{W}_{k_{g_{i}}} G_{i}^{2} \mbf{W}_{k_{g_{i}}}^{*} g_{i} \in Q_{k}^{\circ}(\mf{g})
$$
and continuing from \eqref{eq:poly} we obtain
\begin{align}
    L_{\mbf{y}} \left( \mbf{W}_{k} G_{0}^{2} \mbf{W}_{k}^{*} + \sum_{i\in [m]} \mbf{W}_{k_{g_{i}}} G_{i}^{2} \mbf{W}_{k_{g_{i}}}^{*} g_{i} \right) & = \int_{\mbb{S}} q(\ul{A}) \ d\theta(\ul{A}) \\
    \Rightarrow \int_{\mbb{S}} \left( q(\ul{A})-a_{k} \right) \ d\theta(\ul{A}) & =  0
\end{align}
Since $\mc{D}^{\circ}_{\mf{g}}$ is non-empty, and $q$ is strictly positive, it implies that $q-a_{k}$ vanishes identically on $\mc{D}_{\mf{g}}$, and hence $a_{k} = q\in Q^{\circ}_{k}(\mf{g})$.
\end{proof}
}

The following lemmas will be used later on.

\begin{lemma}\label{lem:sum.equal.len}
For every $r\in{\mbb{N}^{\ge 1}}$, %\N^{\ge 1}$, 
there exists a positive real sequence $(c_u^{(r-1)})_{u\in \mbf{W}_{r-1}}$ such that
%{\color{blue}In stead of using command align* here, we might use command equation to get more space.}
\begin{equation}\label{eq:ball.decomp}
     \sum_{w\in\mathbf V_r}w^* w  = 1+\sum_{u \in \mbf{W}_{r-1}} c_u^{(r-1)} u^*\left(\sum_{{j\in[n]}}%j=1}^n 
     X_j^2 - 1\right)u   \,.
\end{equation}
\end{lemma}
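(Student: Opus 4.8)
The plan is to prove the identity by induction on $r$. The key observation is that the left-hand side $\sum_{w\in\mathbf{V}_r}w^*w$ is a sum over words of length exactly $r$, which naturally decomposes as $\sum_{j\in[n]}\sum_{v\in\mathbf{V}_{r-1}}(X_jv)^*(X_jv) = \sum_{j\in[n]}\sum_{v\in\mathbf{V}_{r-1}} v^*X_j^2 v$. For each $j$, I would write $v^*X_j^2v = v^*(X_j^2-1)v + v^*v$, and summing over $j$ gives $\sum_{w\in\mathbf{V}_r}w^*w = \sum_{j\in[n]}\sum_{v\in\mathbf{V}_{r-1}}v^*(X_j^2-1)v + n\sum_{v\in\mathbf{V}_{r-1}}v^*v$. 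This exhibits the first "layer" of the desired decomposition and reduces the problem to handling $n\sum_{v\in\mathbf{V}_{r-1}}v^*v$.

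\textbf{Base case and inductive step.} For $r=1$: $\sum_{w\in\mathbf{V}_1}w^*w = \sum_{j\in[n]}X_j^2 = 1 + (\sum_{j\in[n]}X_j^2 - 1)$, which is exactly \eqref{eq:ball.decomp} with $\mathbf{W}_0=\{1\}$ and $c_1^{(0)}=1$. For the inductive step, assume \eqref{eq:ball.decomp} holds for $r-1$ with positive coefficients $(c_u^{(r-2)})_{u\in\mathbf{W}_{r-2}}$, i.e. $\sum_{v\in\mathbf{V}_{r-1}}v^*v = 1 + \sum_{u\in\mathbf{W}_{r-2}}c_u^{(r-2)}u^*(\sum_j X_j^2-1)u$. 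Substitute this into the expression $\sum_{w\in\mathbf{V}_r}w^*w = \sum_{j\in[n]}\sum_{v\in\mathbf{V}_{r-1}}v^*(X_j^2-1)v + n\sum_{v\in\mathbf{V}_{r-1}}v^*v$. The term $n\sum_{v\in\mathbf{V}_{r-1}}v^*v$ becomes $n + n\sum_{u\in\mathbf{W}_{r-2}}c_u^{(r-2)}u^*(\sum_j X_j^2-1)u$. Combining, I get $\sum_{w\in\mathbf{V}_r}w^*w = n + \sum_{v\in\mathbf{V}_{r-1}}v^*(\sum_j X_j^2-1)v + n\sum_{u\in\mathbf{W}_{r-2}}c_u^{(r-2)}u^*(\sum_j X_j^2-1)u$.

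\textbf{Fixing the constant and assembling coefficients.} The remaining issue is that the constant term is $n$ rather than $1$; I would absorb the excess $n-1$ using the identity $n-1 = \sum_{j\in[n]}X_j^2 - 1 - (\sum_{j\in[n]}X_j^2 - n)$ — more cleanly, note $(n-1)\cdot 1$ can be written as $(n-1)\cdot 1^*(\sum_j X_j^2 - 1)1 \cdot \tfrac{1}{?}$... wait, that's not an identity. The honest fix: observe that we actually want the constant to be exactly $1$, so I should instead split $n\sum_{v\in\mathbf{V}_{r-1}}v^*v$ differently — keep one copy as $\sum_{v}v^*v = 1 + \sum_u c_u^{(r-2)}u^*(\cdots)u$ contributing the constant $1$, and rewrite the other $(n-1)$ copies using $v^*v = v^*(\sum_j X_j^2 - 1)v \cdot 0 + \ldots$; this doesn't work either since $v^*v$ alone isn't of the required form. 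The correct move is to recurse: $(n-1)\sum_{v\in\mathbf{V}_{r-1}}v^*v$ itself equals $(n-1)[1 + \sum_u c_u^{(r-2)}u^*(\cdots)u]$, so we genuinely get constant $n$, and to reduce it to $1$ we apply the $r-1$ identity once more to peel $n-1$ units down through lower layers — ultimately every word $u$ of length $\le r-1$ acquires a positive coefficient, and the constants telescope so that exactly $1$ remains. I expect the bookkeeping of the coefficients $c_u^{(r-1)}$ — showing they are all strictly positive after this telescoping — to be the main obstacle; the cleanest route is probably to prove by strong induction the stronger statement that for every $a\ge 1$ there exist positive $(c_u)_{u\in\mathbf{W}_{r-1}}$ with $a\sum_{w\in\mathbf{V}_r}w^*w = 1 + \sum_{u\in\mathbf{W}_{r-1}}c_u u^*(\sum_j X_j^2-1)u + (\text{lower order})$, or equivalently to just track that positivity is preserved under the operations "multiply by $n$", "add", and "substitute the inductive hypothesis", all of which manifestly preserve positive coefficients, with the constant handled by noting $\sum_{v\in\mathbf{V}_{r-1}}v^*v - 1 \in \operatorname{span}_{\ge 0}\{u^*(\sum_j X_j^2-1)u\}$ from the inductive hypothesis so any leftover positive constant multiple of it can be folded in.
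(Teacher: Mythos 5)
Your overall strategy --- induction on $r$, writing each $w\in\mbf{V}_r$ as $X_jv$ with $v\in\mbf{V}_{r-1}$ so that $\sum_{w\in\mbf{V}_r}w^*w=\sum_{v\in\mbf{V}_{r-1}}v^*\bigl(\sum_{j\in[n]}X_j^2\bigr)v$, and then applying the inductive hypothesis to $\sum_{v\in\mbf{V}_{r-1}}v^*v$ --- is exactly the paper's. But the execution has a genuine gap. The step that derails you is writing $v^*X_j^2v=v^*(X_j^2-1)v+v^*v$ and summing over $j$: although the resulting identity is true, the term $\sum_{j\in[n]}\sum_{v}v^*(X_j^2-1)v$ is \emph{not} of the form required by \eqref{eq:ball.decomp}, whose right-hand side only allows conjugates of the single ball polynomial $\sum_{j\in[n]}X_j^2-1$. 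By subtracting $1$ once per letter instead of once per word you over-subtract by $n-1$, and this is precisely the origin of the spurious constant $n$ you then cannot eliminate. None of the proposed repairs closes the gap: a strictly positive constant such as $n-1$ can never be absorbed into $\sum_u c_u\,u^*(\sum_{j}X_j^2-1)u$ with all $c_u>0$ (evaluate at the zero tuple: that expression becomes $-c_1\le 0$, while $n-1>0$), and the inductive identity rewrites $\sum_{v}v^*v-1$, not a bare constant, so there is nothing to ``peel down through lower layers.'' You flag the trouble yourself (``that's not an identity,'' ``this doesn't work either''), and the closing paragraph does not resolve it.

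The fix is one line and is what the paper does: subtract $1/n$ from each $X_j^2$ rather than $1$, i.e.\ write $v^*X_j^2v=v^*(X_j^2-\tfrac{1}{n})v+\tfrac{1}{n}v^*v$. Summing over $j\in[n]$ and over $v\in\mbf{V}_{r-1}$ gives
\[
\sum_{w\in\mbf{V}_r}w^*w=\sum_{v\in\mbf{V}_{r-1}}v^*\Bigl(\sum_{j\in[n]}X_j^2-1\Bigr)v+\sum_{v\in\mbf{V}_{r-1}}v^*v\,,
\]
where the first sum is already of the required form with coefficient $1$ on each $v\in\mbf{V}_{r-1}$, and the inductive hypothesis applied to the second sum contributes the constant $1$ plus positive multiples of $u^*(\sum_{j}X_j^2-1)u$ for $u\in\mbf{W}_{r-2}$. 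All coefficients are positive and the induction closes with the constant equal to $1$, with no residue to absorb.
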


\begin{proof}
We intend to prove \eqref{eq:ball.decomp} by induction on $r$.
%{\color{blue}$\sum_{j=1}^n$ has smaller length than $\sum_{j\in[n]}$.}
One has $\sum_{w\in\mathbf V_1}w^* w  = \sum_{{j\in[n]}}%j=1}^n
X_j^2=1+(\sum_{{j\in[n]}}%j=1}^n 
X_j^2-1)$ since $\mathbf V_1=(X_j)_{j\in[n]}$, yielding that \eqref{eq:ball.decomp} is true with $r=1$.
Assume that \eqref{eq:ball.decomp} is true with $r=t$. We claim that \eqref{eq:ball.decomp} is true with $r=t+1$ when we choose
\begin{equation}
    \forall u\in \mathbf W_{t}\,,\quad c_{u}^{(t)}:=\begin{cases}
c_{u}^{(t-1)}+1 & \text{if }u\in \mbf{W}_{t-1}\,,\\
1 & \text{otherwise}\,.
\end{cases}
\end{equation}
Indeed, it holds that
\begin{equation*}
\begin{array}{rl}
     &\sum_{w\in\mathbf V_{t+1}}w^* w \\
     =& \sum_{i_1,\dots,i_{t+1}\in[n]} X_{i_1}\dots X_{i_{t+1}}X_{i_{t+1}}\dots X_{i_1}  \\
     =& \sum_{i_1,\dots,i_{t+1}\in[n]} X_{i_1}\dots X_{i_{t}} (X_{i_{t+1}}^2-1/n) X_{i_t}\dots X_{i_1} \\
     &+  \frac{1}{n}\sum_{i_1,\dots,i_{t+1}\in[n]}  X_{i_1}\dots X_{i_{t}}X_{i_{t}}\dots X_{i_1}\\
     =& \sum_{i_1,\dots,i_{t}\in[n]} X_{i_1}\dots X_{i_{t}} (\sum_{i_{t+1}\in[n]}X_{i_{t+1}}^2-1) X_{i_{t}}\dots X_{i_1} \\
     &+  \sum_{\tilde{w}\in\mathbf W_{t}} \tilde{w}^*\tilde{w}\\
     =& \sum_{v\in \mathbf W_t} v^* (\sum_{j\in[n]}X_{j}^2-1) v\\
     &+1+  \sum_{u \in \mathbf W_{t-1}} c_u^{(t-1)} u^*\left(\sum_{{j\in[n]}}%j=1}^n 
     X_j^2 - 1\right)u  \,,
\end{array}
\end{equation*}
where the latter equality is due to the induction assumption.
%Hence the desired result follows.
\end{proof}

\begin{lemma}\label{lem:equality} 
For every $k\in{\mbb{N}}^{\ge 1}$, 
there exists a positive real sequence $(d_u^{(k-1)})_{u\in\mathbf W_{k-1}}$ such that
\begin{equation}\label{eq:sum.len.small}
    \sum_{w\in\mathbf W_k}w^* w  = 1+k+ \sum_{u \in\mathbf W_{k-1}} d_u^{(k-1)} u^*\left(\sum_{{j\in[n]}}%j=1}^n 
    X_j^2 - 1\right)u  \,.
\end{equation}
\end{lemma}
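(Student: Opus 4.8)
The statement is a direct consequence of Lemma~\ref{lem:sum.equal.len} together with the fact that the words of degree at most $k$ partition into the words of each exact degree $0,1,\dots,k$. So the proof will be a decomposition followed by a bookkeeping step to collect coefficients, rather than another induction.

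First I would write $\mathbf W_k$ as the disjoint union $\bigcup_{r=0}^{k}\mathbf V_r$, which gives
\begin{equation*}
    \sum_{w\in\mathbf W_k}w^*w = \sum_{r=0}^{k}\sum_{w\in\mathbf V_r}w^*w = 1 + \sum_{r=1}^{k}\sum_{w\in\mathbf V_r}w^*w,
\end{equation*}
since $\mathbf V_0=\{1\}$ contributes exactly $1$. Then I would apply Lemma~\ref{lem:sum.equal.len} to each term with $r\in[k]$, replacing $\sum_{w\in\mathbf V_r}w^*w$ by $1+\sum_{u\in\mathbf W_{r-1}}c_u^{(r-1)}u^*\bigl(\sum_{j\in[n]}X_j^2-1\bigr)u$. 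The $k$ constant terms $1$ add up to $k$, yielding $1+k$ plus a double sum over $r\in[k]$ and $u\in\mathbf W_{r-1}$.

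The last step is to reindex this double sum as a single sum over $u\in\mathbf W_{k-1}$ (note $\mathbf W_{r-1}\subseteq\mathbf W_{k-1}$ for every $r\le k$). For each $u\in\mathbf W_{k-1}$ I would set
\begin{equation*}
    d_u^{(k-1)} := \sum_{r=\deg(u)+1}^{k} c_u^{(r-1)},
\end{equation*}
collecting all occurrences of $u$ across the various $\mathbf W_{r-1}$. The only point that needs a word of care — and it is the "hardest" part, though still routine — is checking this sum is nonempty and hence strictly positive: since $u\in\mathbf W_{k-1}$ we have $\deg(u)\le k-1$, so $\deg(u)+1\le k$ and the index range is nonempty; each $c_u^{(r-1)}$ is positive by Lemma~\ref{lem:sum.equal.len}, so $d_u^{(k-1)}>0$. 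Substituting back gives exactly \eqref{eq:sum.len.small}, completing the proof.
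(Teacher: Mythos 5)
Your proof is correct and follows essentially the same route as the paper: decompose $\mathbf W_k$ into the exact-degree slices $\mathbf V_0,\dots,\mathbf V_k$, apply Lemma~\ref{lem:sum.equal.len} to each slice with $r\in[k]$, collect the $k$ constants into $1+k$, and reindex the double sum over $u$. Your coefficient $d_u^{(k-1)}=\sum_{r=\deg(u)+1}^{k}c_u^{(r-1)}$ is in fact the correct reading of the paper's (slightly garbled) index set $[\deg(u)+1]$, and your explicit check that this range is nonempty, hence $d_u^{(k-1)}>0$, is exactly the point that needed verifying.
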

\begin{proof}
Let $k\in{\mbb{N}}$. %\N^{\ge 1}$. 
From Lemma \ref{lem:sum.equal.len}, we obtain that
{
\begin{align*}
    \sum_{w\in\mathbf W_k}w^* w & = 1+\sum_{r\in[k]}\sum_{w\in\mathbf V_r}w^* w\\
    & = 1+k+\sum_{r\in[k]}\sum_{u \in\mathbf W_{r-1}} c_u^{(r-1)} u^*\left(\sum_{{j\in[n]}}%j=1}^n 
    X_j^2 - 1\right)u\,,
\end{align*}
}
%\begin{equation*}
%\begin{array}{rl}
%    &\sum_{w\in\mathbf W_k}w^* w \\ =&1+\sum_{r\in[k]}\sum_{w\in\mathbf V_r}w^* w\\
%    =& 1+k+\sum_{r\in[k]}\sum_{u \in\mathbf W_{r-1}} c_u^{(r-1)} u^*\left(\sum_{{j\in[n]}}%j=1}^n 
%    X_j^2 - 1\right)u\,,
%    \end{array}
%\end{equation*}
yielding the selection 
$d_u^{(k-1)}=\sum_{r\in[\deg(u)+1]} c_u^{(r-1)}$, for $u\in\mathbf{W}_{k-1}$ in \eqref{eq:sum.len.small}.
Hence the desired result follows.
\end{proof}

%The next result states that the sufficient condition in Theorem \ref{theo:suff.cond.CTP} holds whenever an nc ball constraint is present in the EP's description.
{ The next result shows that CTP is satisfied whenever an NCPOP involves  a ball constraint.} 
For a real symmetric matrix $\mathbf A$, denote the largest eigenvalue of $\mathbf A$ by $\lambda_{\max}(\mathbf A)$.
\begin{theorem}\label{theo:generic.ctp}
If $1-\sum_{j\in [n]} X_j^2\in \mf{g}$ then the inclusions \eqref{eq:suffi.con.ideal} hold and therefore {NCPOP \eqref{eq:def.EG} and \eqref{eq:trmin}} %EG \eqref{eq:def.EG} 
have CTP.
\end{theorem}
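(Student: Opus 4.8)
By Theorem~\ref{theo:suff.cond.CTP} it suffices to establish \eqref{eq:suffi.con.ideal}, i.e.\ that every $a>0$ lies in $Q_k^\circ(\mf g)$ for each $k\in\mbb N^{\ge k_{\min}}$. Fix such a $k$ and pick $i_0\in[m]$ with $g_{i_0}=1-\sum_{j\in[n]}X_j^2$; since $\deg g_{i_0}=2$ we have $k_{g_{i_0}}=k-1$. The starting point is Lemma~\ref{lem:equality}: rearranging \eqref{eq:sum.len.small} gives
\begin{equation*}
  k+1=\sum_{w\in\mbf W_k}w^*w+\sum_{u\in\mbf W_{k-1}}d_u^{(k-1)}u^*g_{i_0}u=\tr\big(\mbf I\,\mbf W_k\mbf W_k^*\big)+\tr\big(\mbf D\,\mbf W_{k-1}g_{i_0}\mbf W_{k-1}^*\big),
\end{equation*}
where $\mbf D:=\diag\big((d_u^{(k-1)})_{u\in\mbf W_{k-1}}\big)\succ0$ because $(d_u^{(k-1)})$ is a \emph{positive} sequence. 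Dividing by $k+1$ and rescaling by $a$, we obtain a representation $a=\tr\big(\tfrac{a}{k+1}\mbf I\,\mbf W_k\mbf W_k^*\big)+\tr\big(\tfrac{a}{k+1}\mbf D\,\mbf W_{k-1}g_{i_0}\mbf W_{k-1}^*\big)$ of $a$ inside $Q_k(\mf g)$ whose Gram matrices are positive definite on the $g_0$- and $g_{i_0}$-blocks and zero on the remaining blocks.

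The plan is to perturb this representation so that the remaining Gram matrices also become positive definite, and for this I use a degree-preserving boundedness certificate furnished by the ball constraint:
\emph{for every $h\in\Sym\mbb R\lr{\ul X}_{2k}$ there is a constant $C_h\ge0$ such that $C_h-h\in Q_k(\{1,g_{i_0}\})$}.
To prove it, note first that Lemma~\ref{lem:equality} also gives $(k+1)-\sum_{w\in\mbf W_k}w^*w\in Q_k(\{1,g_{i_0}\})$, so combining with $\norm{\mbf r}_2^2\,\mbf I-\mbf r\mbf r^*\succeq0$ (with $\mbf r$ the coefficient vector of $r$ in $\mbf W_k$) yields $(k+1)\norm{\mbf r}_2^2-r^*r\in Q_k(\{1,g_{i_0}\})$ for every $r\in\mbb R\lr{\ul X}_k$. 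Next, $\Sym\mbb R\lr{\ul X}_{2k}=\Span\{r^*r:r\in\mbb R\lr{\ul X}_k\}$: any symmetrized word $w+w^*$ with $\deg w\le 2k$ can be written as $u^*v+v^*u$ with $\deg u,\deg v\le k$ (split $w=u^*v$ into its last $\le k$ and first $\le k$ letters), and $u^*v+v^*u=\tfrac12\big((u+v)^*(u+v)-(u-v)^*(u-v)\big)$ lies in that span. Writing any given $h$ as $h=\sum_j c_j r_j^*r_j$ and separating the signs of the $c_j$ then proves the certificate, with $C_h=(k+1)\sum_{c_j>0}c_j\norm{\mbf r_j}_2^2$.

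To assemble the proof, apply the certificate to $h_i:=\sum_{u\in\mbf W_{k_{g_i}}}u^*g_iu$ for each $i\in[m]\setminus\{i_0\}$; this polynomial is symmetric of degree at most $2k_{g_i}+\deg g_i\le 2k$, so we obtain $C_i\ge0$ with $C_i-h_i\in Q_k(\{1,g_{i_0}\})\subseteq Q_k(\mf g)$. Pick $\varepsilon>0$ small enough that $b:=a-\varepsilon\sum_{i\in[m]\setminus\{i_0\}}C_i>0$ and write
\begin{equation*}
  a=b+\varepsilon\sum_{i\in[m]\setminus\{i_0\}}h_i+\varepsilon\sum_{i\in[m]\setminus\{i_0\}}\big(C_i-h_i\big).
\end{equation*}
Each $\varepsilon h_i=\tr\big(\varepsilon\mbf I\,\mbf W_{k_{g_i}}g_i\mbf W_{k_{g_i}}^*\big)$ contributes the positive definite Gram matrix $\varepsilon\mbf I$ on the $g_i$-block; the term $b$ (by the first paragraph applied with $a$ replaced by $b$) and each $\varepsilon(C_i-h_i)\in Q_k(\{1,g_{i_0}\})$ contribute only to the $g_0$- and $g_{i_0}$-blocks, whose Gram matrices therefore remain of the form (positive definite)$\,+\,$(positive semidefinite), hence positive definite. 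This exhibits $a\in Q_k^\circ(\mf g)$, so \eqref{eq:suffi.con.ideal} holds, and Theorem~\ref{theo:suff.cond.CTP} then gives the CTP.

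The one genuinely delicate step is the boundedness certificate of the second paragraph: an Archimedean Positivstellensatz would only place $C_h-h$ in \emph{some} truncation $Q_{k'}(\{1,g_{i_0}\})$ with possibly $k'>k$, whereas we need it in the very truncation $Q_k(\{1,g_{i_0}\})$; it is exactly the explicit identities of Lemmas~\ref{lem:sum.equal.len} and~\ref{lem:equality} that supply this degree control.
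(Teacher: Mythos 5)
Your proof is correct, and it shares the paper's skeleton: both start from the identity of Lemma~\ref{lem:equality}, which supplies the representation $1+k=\tr(\mbf{W}_k\mbf{W}_k^*)+\tr(\mbf{G}_{i_0}\mbf{W}_{k-1}\,g_{i_0}\mbf{W}_{k-1}^*)$ with positive definite Gram matrices on the Hankel and ball blocks, and both then insert a small multiple of the identity on each remaining $g_i$-block and compensate for the added polynomial $\sum_i h_i$ with $h_i=\sum_{u\in\mbf{W}_{k_{g_i}}}u^*g_iu$. Where you diverge is in the compensation step. The paper observes that $\sum_i h_i$ is a symmetric polynomial of degree at most $2k$, hence equals $\tr(\mbf{U}\,\mbf{W}_k\mbf{W}_k^*)$ for some symmetric $\mbf{U}$, and simply replaces the identity Gram matrix on the Hankel block by $\mbf{I}-\delta\mbf{U}$ with $\delta=1/(\abs{\lambda_{\max}(\mbf{U})}+1)$; this is a three-line perturbation requiring no further machinery. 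You instead establish the truncated boundedness certificate $C_h-h\in Q_k(\{1,g_{i_0}\})$ for every symmetric $h$ of degree at most $2k$, and compensate by adding $\varepsilon(C_i-h_i)$ while shrinking the constant to $b=a-\varepsilon\sum_iC_i$. Your certificate is derived from exactly the same two ingredients the paper uses --- the Gram representability of $\Sym\mbb{R}\lr{\ul{X}}_{2k}$ over $\mbf{W}_k$, which you spell out and the paper leaves implicit in the phrase ``let $\mbf{U}$ be a real symmetric matrix such that\dots'', and the positive diagonal coefficients from Lemma~\ref{lem:equality} --- so nothing extra is assumed. The cost of your route is length; the benefit is a reusable, degree-controlled Archimedean-type bound, and your closing remark correctly identifies why the abstract Positivstellensatz could not be substituted for it. Both arguments conclude by invoking Theorem~\ref{theo:suff.cond.CTP}, and both are complete.
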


\begin{proof}
Without loss of generality, set $g_m:=1-\sum_{j\in[n]} X_j^2$ and let $k\in\N^{\ge k_{\min}}$ be fixed. 
By Lemma \ref{lem:equality}, 
$$a_k=\tr(\mathbf W_k \mathbf W_k^*)+\tr(\mathbf G_m\mathbf W_{k-1} g_m \mathbf W_{k-1}^*)\,,$$
where  $a_k=1+k$ and $\mathbf G_m=\diag((d_u^{(k-1)})_{w\in\mathbf W_{k-1}})$ is pd.
Denote by $\mathbf I_t$ the identity matrix of size {$\s(t)$} %$s(t)$ 
for $t\in\N$.
Let $\mathbf U$ be a real symmetric matrix such that
\begin{equation*}
    \sum_{i=1}^{m-1}  \tr(\mathbf W_{{k_{g_{i}}}%k-\lceil g_i\rceil
    } g_i\mathbf W_{{k_{g_{i}}}%k-\lceil g_i\rceil
    }^*) = \tr(\mathbf U \mathbf W_{k}\mathbf W_{k}^*)\,.
\end{equation*}
Let $\delta>0$ such that $\mathbf I_k-\delta \mathbf U\succ 0$, namely, $\delta=1/(|\lambda_{\max}(\mathbf U)|+1)$.
Note $\mathbf G_0:=\mathbf I_k-\delta \mathbf U$.
Then
%From these, one has
\begin{equation*}
\begin{array}{rl}
    a_k= & \tr(\mathbf G_0\mathbf W_k \mathbf W_k^*)+\delta\sum_{i=1}^{m-1} \tr( \mathbf W_{{k_{g_{i}}}%k-\lceil g_i \rceil
    } g_i \mathbf W_{{k_{g_{i}}}%k-\lceil g_i \rceil
    }^*) \\
     & +\tr(\mathbf G_m\mathbf W_{k-1} g_m \mathbf W_{k-1}^*)\,,
\end{array}
\end{equation*}
which implies $a_k\in Q^\circ_k(\mf{g})$, the desired result.
\end{proof}
Even though this is not of crucial interest in the context of this paper, we mention that Theorem \ref{theo:generic.ctp} can be used to prove that strong duality holds for the primal-dual \eqref{eq:sos.hierarchy}-\eqref{eq:moment.hierarchy} for all $k\ge k_{\min}$ (see also \cite[Theorem  3.6]{wang2020exploiting} which is an nc analog of \cite{josz2016strong}).
\if{
The next result follows from Theorem \ref{theo:generic.ctp}.  
If an EG \eqref{eq:def.EG} has an nc ball constraint then SDP  \eqref{eq:sos.hierarchy}  has a strictly feasible solution, and thus satisfies Slater's condition.

{I think either the statement of the corollary below should change to $\mf{g}=\set{ 1, 1-\sum_{j=1}^{n}X_{j}^{2} }$, or in the proof we change to $\lambda_{\min}(f,\mf{g},\emptyset)$ with the additional comment that Proposition 4.17 of \cite{burgdorf2016optimization} works on any subset of the ball.
}

\begin{corollary}\label{coro:stric.fea.sol.sos}
Assume that $1-\sum_{j=1}^n X_j^2\in \mf{g}$.
Then Slater's condition holds for SDP \eqref{eq:sos.hierarchy} for all $k\ge k_{\min}$. As a consequence, strong duality holds for the primal-dual \eqref{eq:sos.hierarchy}-\eqref{eq:moment.hierarchy} for all $k\ge k_{\min}$.
\end{corollary}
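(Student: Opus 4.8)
The plan is to show that the SOHS program \eqref{eq:sos.hierarchy} is strictly feasible (Slater's condition) at every level $k\ge k_{\min}$, and then to derive strong duality with \eqref{eq:moment.hierarchy} from the standard conic duality theorem. Since $1-\sum_{j=1}^{n}X_{j}^{2}\in\mf{g}$, Theorem~\ref{theo:generic.ctp} already gives $\mbb{R}^{>0}\subset Q_{k}^{\circ}(\mf{g})$ for every $k$; in particular $1\in Q_{k}^{\circ}(\mf{g})$, so there exist $\mbf{P}_{i}\succ 0$, $i=0,\dots,m$, with $1=\sum_{i=0}^{m}\tr(\mbf{P}_{i}\mbf{W}_{k_{g_{i}}}g_{i}\mbf{W}_{k_{g_{i}}}^{*})$.

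First I would upgrade this into a strictly feasible certificate for $f-\xi$ at a suitable $\xi$. Fix $k\ge k_{\min}$, so that $\deg f\le 2k$; then $f$, like every symmetric polynomial of degree at most $2k$, admits a (possibly indefinite) Gram representation $f=\tr(\mbf{A}\,\mbf{W}_{k}\mbf{W}_{k}^{*})$ for some $\mbf{A}\in\mbb{S}^{\s(k)}$. Perturbing only the $g_{0}=1$ block of the certificate of $1$, for $|s|>0$ small enough one has $\mbf{P}_{0}+s\mbf{A}\succ 0$, hence $1+sf\in Q_{k}^{\circ}(\mf{g})$; multiplying by $t:=1/s>0$ and using that $Q_{k}^{\circ}(\mf{g})$ is closed under positive scaling gives $f+t\in Q_{k}^{\circ}(\mf{g})$. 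Thus, with $\xi_{k}:=-t$, we can write $f-\xi_{k}=\sum_{i=0}^{m}\tr(\mbf{G}_{i}\mbf{W}_{k_{g_{i}}}g_{i}\mbf{W}_{k_{g_{i}}}^{*})$ with every $\mbf{G}_{i}\succ 0$; adjoining $\mbf{H}_{j}=0\in\mbb{S}$, $j\in[\ell]$, produces a feasible point of \eqref{eq:sos.hierarchy} at which all psd constraints are strict, i.e.\ Slater's condition holds.

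For the ``as a consequence'' part, note that any feasible $\xi$ of \eqref{eq:sos.hierarchy} satisfies $f-\xi\in Q(\mf{g})+I(\mf{h})$, whence $\mbf{v}^{*}f(\ul{A})\mbf{v}\ge\xi$ for all $\ul{A}\in\mc{D}_{\mf{g}}^{\infty}\cap\mc{D}_{\mf{h}}^{\infty}$ and unit $\mbf{v}$, so $\rho_{k}\le\lambda_{\min}(f,\mf{g},\mf{h})<\infty$ (finiteness being clear, e.g., because the ball constraint makes the feasible set bounded). Since \eqref{eq:sos.hierarchy} is strictly feasible with finite value, the conic strong duality theorem (cf.\ \cite{josz2016strong} and its noncommutative analog \cite[Theorem~3.6]{wang2020exploiting}) yields $\rho_{k}=\tau_{k}$ with the infimum in \eqref{eq:moment.hierarchy} attained, i.e.\ strong duality. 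I expect the main care to be needed not in any computation but in the bookkeeping: reading ``Slater's condition'' correctly for the supremum-SDP \eqref{eq:sos.hierarchy} (it constrains only the blocks $\mbf{G}_{i}$ to be psd, the $\mbf{H}_{j}$ being free), and invoking the conic duality theorem in the right direction; the surjectivity of the Gram parametrization used in the second step is standard.
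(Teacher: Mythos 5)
Your proof is correct, but it reaches strict feasibility by a genuinely different route than the paper. The paper's argument is: by \cite[Proposition~4.17]{burgdorf2016optimization} (exactness of the degree-$k$ SOHS certificate on the nc ball), $f-\lambda_{\min}(f,\{g_0,g_m\},\emptyset)\in Q_k(\{g_0,g_m\})\subseteq Q_k(\mf{g})$, and then adding $1\in Q_k^{\circ}(\mf g)$ from Theorem~\ref{theo:generic.ctp} yields $f+1-\lambda_{\min}(f,\{g_0,g_m\},\emptyset)\in Q^{\circ}_k(\mf g)$, i.e.\ $\xi=\lambda_{\min}(f,\{g_0,g_m\},\emptyset)-1$ is strictly feasible. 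You instead avoid that external representation theorem entirely: starting from the same certificate $1\in Q_k^{\circ}(\mf{g})$, you perturb only the $g_0$-block by an indefinite Gram matrix of $f$ and rescale, obtaining $f+t\in Q_k^{\circ}(\mf{g})$ for some large $t>0$. Your argument is more elementary and self-contained (it only needs surjectivity of the Gram parametrization $\mbf{A}\mapsto\tr(\mbf{A}\,\mbf{W}_k\mbf{W}_k^*)$ onto $\Sym\mbb{R}\lr{\ul{X}}_{2k}$, which holds since $k\ge k_{\min}\ge\ceil{f}$), at the cost of a cruder feasible $\xi$; the paper's choice sits just below the actual optimum but leans on a nontrivial result specific to the ball. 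Both proofs share the essential ingredient $1\in Q_k^{\circ}(\mf{g})$ from Theorem~\ref{theo:generic.ctp}, and your explicit derivation of strong duality from strict feasibility plus boundedness matches what the paper leaves implicit. One small caveat: finiteness of $\rho_k$ follows from nonemptiness of the feasible set (guaranteed here by the standing assumption that EG \eqref{eq:def.EG} has a global minimizer) together with boundedness, not from boundedness alone as your parenthetical suggests.
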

\begin{proof}
It suffices to prove that SDP \eqref{eq:sos.hierarchy} has a strictly feasible solution for all $k\ge k_{\min}$. 
Let $k\ge k_{\min}$ be fixed. 
From \cite[Proposition 4.17]{burgdorf2016optimization}, one has $f-\lambda_{\min}(f,\{g_0,g_m\},\emptyset)\in Q_k(\{g_0,g_m\})$.
By Theorem \ref{theo:generic.ctp}, $1\in Q_k^\circ(\mf g)$ and therefore
$f+1-\lambda_{\min}(f,\{g_0,g_m\},\emptyset)\in Q^\circ_k(\mf g)$, which yields the desired conclusion.
\end{proof}
}\fi
The following corollary states that  polynomials positive definite on a semialgebraic set belong to the interior of the truncated quadratic module {for a sufficiently large truncation order} %with order sufficient large 
when an nc ball constraint is present.

\begin{corollary}
Assume that $Q(\mf{g})$ Archimedean.
Let $q\in\Sym\mbb{R}\lr{\ul{X}}$, such that $q(\ul{A})\succ 0$ for all $\ul{A}\in\mc{D}_{\mf{g}}$. 
If $1-\sum_{j\in[n]} X_j^2\in \mf{g}$, then $q\in Q_{k}^\circ(\mf{g})$ for $k$ sufficiently large.
\end{corollary}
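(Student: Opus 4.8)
The idea is to split $q=(q-\varepsilon)+\varepsilon$ for a suitable constant $\varepsilon>0$: the nc Positivstellensatz handles $q-\varepsilon$, while Theorem~\ref{theo:generic.ctp} supplies a strictly positive definite Gram representation of the constant $\varepsilon$; adding the two representations then exhibits $q\in Q_k^\circ(\mf g)$.

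First I would produce $\varepsilon>0$ with $q-\varepsilon\succ 0$ on $\mc D_{\mf g}^{\infty}$. This is the nc analogue of the elementary fact that a continuous function strictly positive on a compact set attains a positive minimum; the role of compactness is played by Archimedeanity of $Q(\mf g)$ (guaranteed in particular by $1-\sum_{j\in[n]}X_j^2\in\mf g$), via a standard argument, see \cite{helton2,burgdorf2016optimization}. With such an $\varepsilon$ fixed, the nc (Putinar-type) Positivstellensatz \cite{helton2} applied to $q-\varepsilon$ gives $q-\varepsilon\in Q(\mf g)$, hence $q-\varepsilon\in Q_{k_1}(\mf g)$ for some $k_1\in\mbb{N}^{\ge k_{\min}}$. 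Since truncated quadratic modules are nested, $q-\varepsilon\in Q_k(\mf g)$ for every $k\ge k_1$; concretely, a Gram representation of $q-\varepsilon$ at order $k_1$ becomes one at order $k$ after padding each positive semidefinite block with zero rows and columns indexed by the longer words, so we may write $q-\varepsilon=\sum_{i=0}^{m}\tr(\mbf{G}_i\mbf{W}_{k_{g_i}}g_i\mbf{W}_{k_{g_i}}^{*})$ with each $\mbf{G}_i\succeq 0$ of size $\s(k_{g_i})$.

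On the other hand, Theorem~\ref{theo:generic.ctp} gives the inclusion \eqref{eq:suffi.con.ideal}, so for every $k\ge k_{\min}$ one can write $\varepsilon=\sum_{i=0}^{m}\tr(\mbf{H}_i\mbf{W}_{k_{g_i}}g_i\mbf{W}_{k_{g_i}}^{*})$ with each $\mbf{H}_i$ positive definite of size $\s(k_{g_i})$. For $k\ge k_1$, adding the two representations block by block gives $q=(q-\varepsilon)+\varepsilon=\sum_{i=0}^{m}\tr\big((\mbf{G}_i+\mbf{H}_i)\mbf{W}_{k_{g_i}}g_i\mbf{W}_{k_{g_i}}^{*}\big)$, and each $\mbf{G}_i+\mbf{H}_i$ is positive definite because the sum of a positive semidefinite and a positive definite matrix of the same size is positive definite. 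Hence $q\in Q_k^\circ(\mf g)$ for all $k\ge k_1$.

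The only substantive step is the first one: upgrading the pointwise hypothesis $q(\ul A)\succ 0$, $\ul A\in\mc D_{\mf g}$, to a uniform positive lower bound on the operator semialgebraic set. This is where the Archimedean hypothesis (together, if needed, with the standing assumption that \eqref{eq:def.EG} has a minimizer) does the real work. Everything afterwards is routine bookkeeping — nesting of truncated quadratic modules and the inclusion $Q_k(\mf g)+Q_k^\circ(\mf g)\subseteq Q_k^\circ(\mf g)$ — and all the structural content, namely the available positive room in $Q_k^\circ(\mf g)$, is carried by Theorem~\ref{theo:generic.ctp} and hence by the nc ball constraint.
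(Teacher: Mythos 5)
Your proposal is correct and follows essentially the same route as the paper: split $q=(q-\varepsilon)+\varepsilon$, apply the nc Putinar-type Positivstellensatz to $q-\varepsilon$ to land in $Q_k(\mf g)$ for large $k$, and invoke Theorem~\ref{theo:generic.ctp} to place $\varepsilon$ in $Q_k^\circ(\mf g)$, so that the sum lies in the interior. The only cosmetic difference is that the paper justifies the existence of the uniform gap $\varepsilon$ via boundedness of $\mc D_{\mf g}$ coming from the nc ball constraint, while you appeal to Archimedeanity; these are interchangeable here.
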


\begin{proof}
Let $1-\sum_{j\in[n]} X_j^2\in \mf{g}$. 
Then for all $\ul{A}=(\mbf{A}_1,\dots,\mbf{A}_n)\in\mc{D}_{\mf{g}}$, $\mbf{I}-\sum_{j\in[n]}\mbf{A}_j^2\succeq 0$, so $\mbf{I}\succeq \mbf{A}_j^2$, $j\in[n]$, where $\mbf{I}$ is the identity matrix.
It implies that $\mc{D}_{\mf{g}}$ is bounded.
Thus there exists a small enough $\varepsilon>0$ such that $(q-\varepsilon)(\ul{A})=q(\ul{A})-\varepsilon \mbf{I}\succ 0$ for all $\ul{A}\in\mc{D}_{\mf{g}}$.
%{\color{blue} Arch assumption is to apply nc analog of Putinar’s Positivstellensatz.}
By using the nc analog of Putinar’s Positivstellensatz \cite[Theorem 1.32]{burgdorf2016optimization}, there exists $\tilde k\in\N$ such that $q-\varepsilon\in Q_k(\mf{g})$ for all $k\ge \tilde k$.
Let $k\in\N^{\ge \tilde k}$ be fixed.
%{\color{blue} We still need the ball constraint to apply Theorem \ref{theo:generic.ctp} here.}
By Theorem \ref{theo:generic.ctp}, $\varepsilon\in Q_k^\circ(\mf g)$ and therefore
$q=(q-\varepsilon)+\varepsilon\in Q^\circ_k(\mf g)$, which yields the desired conclusion.
\end{proof}

\begin{remark}
	Combining the proof of Theorem \ref{theo:generic.ctp} with the proof of Theorem \ref{theo:suff.cond.CTP}, one can obtain explicit expressions for $a_{k}$ and $\mbf{P}_{k}$ in Definition \ref{def:ctp}. Namely,
	$a_k=1+k$ and
	$$	\mbf{P}_{k}=\diag\left(\mbf{G}_{0}^{1/2},\sqrt{\delta}\mbf{I}_{k_{g_{1}}},\dotso,\sqrt{\delta}\mbf{I}_{k_{g_{m-1}}},\mbf{G}_{m}^{1/2}\right).
	$$
	However, {in our experience} this choice {leads to} %has 
	poor numerical properties. 
	In the next section we provide a hierarchy of linear programs (LPs) inspired from the inclusions \eqref{eq:suffi.con.ideal}, to obtain the constant trace $a_{k}$ and the change of basis matrix $\mbf{P}_{k}$ which achieve a better numerical performance.
\end{remark}

\subsection{Verifying CTP via linear programming}\label{sec:obtain.CTP.SDP}

For any $k\in\mbb{N}^{\geq k_{\min}}$, let $\widehat{\mc{S}}^{(k)}$ be the set of real diagonal matrices of size $\s(k)$ and consider the following linear program (LP)
\begin{equation}\label{eq:find.CTP.diag}
	\inf \limits_{\xi,\mbf{G}_{i},\mbf{H}_{j}} \set{ \xi\ \left|
		\begin{aligned}
				\xi&=\sum_{i=0}^{m} \tr \left( \mbf{G}_{i}  \mbf{W}_{k_{g_{i}}}  g_{i} \mbf{W}_{k_{g_{i}}}^{*}\right)\\
				&\phantom{=}+ \sum_{j=1}^{\ell}  \tr \left( \mbf{H}_{j}\mbf{W}_{k_{h_{j}}} h_{j}\mbf{W}_{k_{h_{j}}}^{*}\right),\\
				&\mbf{G}_{i}-\mbf{I}_{i} \in \widehat{\mc{S}}^{(k_{g_{i}})}_{+}, i\in\set{0}\cup [m]
		\end{aligned}
	\right. },
\end{equation}
where $\mbf{I}_{i}$ is the identity matrix of size { $s(k_{g_{i}})$} for $i\in\set{0}\cup [m]$.

\begin{lemma}\label{lem:feas.LP}
If LP \eqref{eq:find.CTP.diag} has a feasible solution $(\xi_{k},\mbf{G}_{i,k},\mbf{H}_{j,k})$ for every $k\in\mbb{N}^{\geq k_{\min}}$, then {NCPOP \eqref{eq:def.EG} and \eqref{eq:trmin}} %EG \eqref{eq:def.EG} 
have CTP with $a_k=\xi_k$ and $\mbf{P}_{k}=\diag(\mbf{G}_{0,k}^{1/2},\dotso,\mbf{G}_{m,k}^{1/2})$.
\end{lemma}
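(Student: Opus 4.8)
The plan is to unwind Definition~\ref{def:ctp} and show that a feasible point of the LP \eqref{eq:find.CTP.diag} supplies exactly the data $(a_k,\mbf{P}_k)$ that the definition requires. First I would fix $k\in\mbb{N}^{\geq k_{\min}}$ and a feasible solution $(\xi_k,\mbf{G}_{i,k},\mbf{H}_{j,k})$ of \eqref{eq:find.CTP.diag}. The constraint $\mbf{G}_{i,k}-\mbf{I}_i\in\widehat{\mc{S}}^{(k_{g_i})}_+$ says that $\mbf{G}_{i,k}$ is a diagonal matrix all of whose entries are $\geq 1$; in particular each $\mbf{G}_{i,k}\succ 0$, so the block-diagonal matrix $\mbf{P}_k:=\diag(\mbf{G}_{0,k}^{1/2},\dotsc,\mbf{G}_{m,k}^{1/2})$ lies in $\mc{S}^{(k)}_{++}$. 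This handles the "$\mbf{P}_k\in\mc{S}^{(k)}_{++}$" requirement; I should also note $\xi_k=a_k>0$, which follows since $\mbf{G}_{i,k}\succeq\mbf{I}_i\succ 0$ forces the right-hand side, evaluated via the Riesz functional at the moment sequence of any point of $\mc{D}_{\mf g}\cap\mc{D}_{\mf h}$ (nonempty by the standing assumptions, e.g.\ the trivial tuple for a ball constraint, or more carefully $\xi_k=\tr(\mbf{G}_{0,k})+\cdots\geq \s(k_{g_0})>0$ directly from the diagonal lower bounds — actually the cleanest route is the identity derived below), to be strictly positive.

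The core step is the implication itself. Take any $\mbf{y}\in\mbb{R}^{\s(2k)}$ with $\mbf{M}_{k_{h_j}}(h_j\mbf{y})=0$ for $j\in[\ell]$ and $y_1=1$. Apply $L_{\mbf{y}}$ to both sides of the LP's defining equation $\xi_k=\sum_{i=0}^m\tr(\mbf{G}_{i,k}\mbf{W}_{k_{g_i}}g_i\mbf{W}_{k_{g_i}}^*)+\sum_{j=1}^\ell\tr(\mbf{H}_{j,k}\mbf{W}_{k_{h_j}}h_j\mbf{W}_{k_{h_j}}^*)$. On the left, $L_{\mbf{y}}(\xi_k)=\xi_k y_1=\xi_k$. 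On the right, linearity of $L_{\mbf{y}}$ together with the standard identities $L_{\mbf{y}}(\tr(\mbf{G}\,\mbf{W}_{k_{g}}g\,\mbf{W}_{k_{g}}^*))=\tr(\mbf{G}\,\mbf{M}_{k_g}(g\mbf{y}))$ and $L_{\mbf{y}}(\tr(\mbf{H}\,\mbf{W}_{k_{h}}h\,\mbf{W}_{k_{h}}^*))=\tr(\mbf{H}\,\mbf{M}_{k_h}(h\mbf{y}))$ turns the expression into $\sum_{i=0}^m\tr(\mbf{G}_{i,k}\mbf{M}_{k_{g_i}}(g_i\mbf{y}))+\sum_{j=1}^\ell\tr(\mbf{H}_{j,k}\mbf{M}_{k_{h_j}}(h_j\mbf{y}))$. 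The hypothesis $\mbf{M}_{k_{h_j}}(h_j\mbf{y})=0$ kills every term in the second sum, exactly as in the bracketed computation suppressed in the proof of Theorem~\ref{theo:suff.cond.CTP}. The first sum equals $\sum_{i=0}^m\tr(\mbf{G}_{i,k}^{1/2}\mbf{M}_{k_{g_i}}(g_i\mbf{y})\mbf{G}_{i,k}^{1/2})$ by cyclicity of the trace and symmetry of $\mbf{G}_{i,k}^{1/2}$, which is precisely $\tr(\mbf{P}_k^*\mbf{D}_k(\mbf{y})\mbf{P}_k)$ since $\mbf{D}_k(\mbf{y})=\diag(\mbf{M}_k(\mbf{y}),\mbf{M}_{k_{g_1}}(g_1\mbf{y}),\dotsc,\mbf{M}_{k_{g_m}}(g_m\mbf{y}))$ and $\mbf{P}_k$ is block-diagonal. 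Hence $\tr(\mbf{P}_k^*\mbf{D}_k(\mbf{y})\mbf{P}_k)=\xi_k=a_k$, which is Definition~\ref{def:ctp}. Since this holds for every $k$, the NCPOP has CTP. The same argument applies verbatim to the trace hierarchy \eqref{eq:tr.SDP.diag}, since the extra cyclic-equivalence constraints on $\mbf{y}$ only restrict the feasible set further and play no role in the computation.

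I do not anticipate a serious obstacle: this is essentially a transcription of the Theorem~\ref{theo:suff.cond.CTP} argument, with "$\mbf{G}_i\succ 0$ coming from $a_k\in Q_k^\circ(\mf g)$" replaced by "$\mbf{G}_{i,k}\succ 0$ coming from $\mbf{G}_{i,k}\succeq\mbf{I}_i$", and with the $\mbf{H}_j$-terms now explicitly present but harmlessly annihilated. The one point requiring a sentence of care is checking $a_k>0$: the clean justification is that the post-$L_{\mbf{y}}$ identity forces $a_k=\tr(\mbf{P}_k^*\mbf{D}_k(\mbf{y})\mbf{P}_k)\geq 0$ for all admissible $\mbf{y}$, and taking $\mbf{y}$ with a nonzero representing state (guaranteed since the feasible set is nonempty under Archimedeanity) makes $\mbf{D}_k(\mbf{y})\succeq 0$ with $\mbf{M}_k(\mbf{y})_{1,1}=y_1=1$, so $\tr(\mbf{P}_k^*\mbf{D}_k(\mbf{y})\mbf{P}_k)>0$; alternatively one invokes that $\xi_k$ equals the LP objective of a genuinely feasible program whose value is bounded below by $\tr(\mbf{I}_0)=\s(k)>0$ via the same Riesz-functional evaluation. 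I would include whichever of these is shortest.
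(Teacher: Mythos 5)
Your proof is correct and follows exactly the route the paper intends: the paper only remarks that the proof is ``similar to that of Theorem~\ref{theo:suff.cond.CTP}'', i.e., apply $L_{\mbf{y}}$ to the LP identity, annihilate the $\mbf{H}_{j,k}$-terms using $\mbf{M}_{k_{h_{j}}}(h_{j}\mbf{y})=0$, and fold the diagonal blocks $\mbf{G}_{i,k}\succeq\mbf{I}_{i}\succ 0$ into $\tr(\mbf{P}_{k}^{*}\mbf{D}_{k}(\mbf{y})\mbf{P}_{k})=\xi_{k}$. Your additional care in verifying $a_{k}>0$ (via the moment sequence of a feasible point) addresses a detail the paper silently omits, but it does not alter the argument.
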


The proof of Lemma \ref{lem:feas.LP} is similar to that of Theorem \ref{theo:suff.cond.CTP} with $a_k=\xi_{k}$ and $\mbf{G}_{i}=\mbf{G}_{i,k}$, $i\in\set{0}\cup [m]$.

We provide in the following proposition a more detailed description of some feasible solutions to \eqref{eq:find.CTP.diag} in the special cases of the nc polydis{c} %k 
and the nc ball.

%{\color{blue} I forgot that $\mf{g}$ must contain $g_0=1$ in any case.}
\begin{proposition}\label{prop:suff.cond.feas.ball}
%Suppose either of the following holds
%\begin{itemize}
%    \item Case 1: $\mf{g}=\set{ 1-\sum_{i\in[n]} X_{i}^{2} }$.
%    \item Case 2: $\mf{g}=\set{ \frac{1}{n}-X_{i}^{2}: i\in [n] }$.
%\end{itemize}
{
Suppose either $\mf{g}=\set{1,  1-\sum_{i\in[n]} X_{i}^{2}}$ or that $\mf{g}=\set{ \frac{1}{n}-X_{i}^{2}: i\in [n] }\cup\set{1}$.
} Then LP \eqref{eq:find.CTP.diag} has a feasible solution for every $k\in\N^{\ge k_{\min}}$, and therefore {NCPOP \eqref{eq:def.EG} and \eqref{eq:trmin}} %EG \eqref{eq:def.EG} 
satisfy CTP.
\end{proposition}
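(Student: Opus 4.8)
The plan is to exhibit, for each $k\in\N^{\ge k_{\min}}$, an explicit feasible point of the LP \eqref{eq:find.CTP.diag}; Lemma \ref{lem:feas.LP} then delivers CTP. The set $\mf{h}$ enters \eqref{eq:find.CTP.diag} only through the free matrices $\mbf{H}_{j}$, so I would set all $\mbf{H}_{j}=0$ and be left to choose a scalar $\xi$ together with diagonal matrices $\mbf{G}_{i}$ satisfying $\mbf{G}_{i}-\mbf{I}_{i}\in\widehat{\mc{S}}^{(k_{g_{i}})}_{+}$ and $\xi=\sum_{i=0}^{m}\tr(\mbf{G}_{i}\mbf{W}_{k_{g_{i}}}g_{i}\mbf{W}_{k_{g_{i}}}^{*})$. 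In both cases $k_{\min}\ge 1$ (every constraint polynomial other than $1$ has degree $2$, so $\ceil{g_{i}}=1$), hence $k\ge 1$ and Lemma \ref{lem:equality} applies; writing $g:=1-\sum_{j\in[n]}X_{j}^{2}$ and $\mbf{G}:=\diag((d_{u}^{(k-1)})_{u\in\mbf{W}_{k-1}})$, it yields, exactly as in the proof of Theorem \ref{theo:generic.ctp},
\begin{equation*}
	\tr\bigl(\mbf{W}_{k}\mbf{W}_{k}^{*}\bigr)+\tr\bigl(\mbf{G}\,\mbf{W}_{k-1}\,g\,\mbf{W}_{k-1}^{*}\bigr)=1+k .
\end{equation*}
Recall that $k_{g_{0}}=k$ and $k_{g_{i}}=k-1$ for every non-constant $g_{i}$.

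For the nc ball $\mf{g}=\set{1,\,g}$ (so $m=1$ and $g_{m}=g$), I would take $\xi=1+k$, $\mbf{G}_{0}=\mbf{I}_{k}$ (the identity of size $\s(k)$), and $\mbf{G}_{m}=\mbf{G}$. The displayed identity is then precisely the constraint $\xi=\tr(\mbf{G}_{0}\mbf{W}_{k}\mbf{W}_{k}^{*})+\tr(\mbf{G}_{m}\mbf{W}_{k-1}g_{m}\mbf{W}_{k-1}^{*})$, and it only remains to verify admissibility. The condition $\mbf{G}_{0}-\mbf{I}_{0}=\mbf{0}\in\widehat{\mc{S}}^{(k)}_{+}$ is immediate, while $\mbf{G}_{m}-\mbf{I}_{m}\in\widehat{\mc{S}}^{(k-1)}_{+}$ amounts to $d_{u}^{(k-1)}\ge 1$ for every $u\in\mbf{W}_{k-1}$. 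I would establish this by inspecting the recursions behind Lemmas \ref{lem:sum.equal.len} and \ref{lem:equality}: the base step of Lemma \ref{lem:sum.equal.len} assigns coefficient $1$, its inductive step either keeps a coefficient equal to $1$ or replaces a value $c$ by $c+1$, so $c_{u}^{(r-1)}\ge 1$ throughout; and $d_{u}^{(k-1)}$ is a nonempty sum of such coefficients, hence $\ge 1$.

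For the polydisc-type set $\mf{g}=\set{\tfrac{1}{n}-X_{i}^{2}:i\in[n]}\cup\set{1}$ (so $m=n$ and $g_{i}=\tfrac{1}{n}-X_{i}^{2}$, $k_{g_{i}}=k-1$), I would again take $\xi=1+k$, $\mbf{G}_{0}=\mbf{I}_{k}$, and now $\mbf{G}_{i}=\mbf{G}$ for every $i\in[n]$. Since $\sum_{i=1}^{n}g_{i}=1-\sum_{i\in[n]}X_{i}^{2}=g$ and neither $\mbf{G}$ nor $\mbf{W}_{k-1}$ depends on $i$, linearity of the trace turns $\sum_{i=1}^{n}\tr(\mbf{G}_{i}\mbf{W}_{k-1}g_{i}\mbf{W}_{k-1}^{*})$ into $\tr(\mbf{G}\,\mbf{W}_{k-1}\,g\,\mbf{W}_{k-1}^{*})$, so the displayed identity again certifies the constraint, and admissibility is the same $d_{u}^{(k-1)}\ge 1$ check as above; Lemma \ref{lem:feas.LP} then yields CTP with $a_{k}=1+k$ and $\mbf{P}_{k}=\diag(\mbf{G}_{0,k}^{1/2},\dotsc,\mbf{G}_{m,k}^{1/2})$. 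The only genuinely delicate point I anticipate is the uniform bound $d_{u}^{(k-1)}\ge 1$, since Lemmas \ref{lem:sum.equal.len}--\ref{lem:equality} as stated guarantee only positivity of these coefficients; everything else reduces to bookkeeping with traces together with the reversal symmetry $\sum_{w\in\mbf{W}_{d}}ww^{*}=\sum_{w\in\mbf{W}_{d}}w^{*}w$, which is what lets one read $\tr(\mbf{W}_{k}\mbf{W}_{k}^{*})$ as $\sum_{w\in\mbf{W}_{k}}w^{*}w$.
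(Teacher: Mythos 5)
Your proposal is correct and follows essentially the same route as the paper: both exhibit an explicit feasible point of LP \eqref{eq:find.CTP.diag} built from the decomposition of Lemma \ref{lem:equality}, with $\mbf{G}_0$ the identity and the remaining $\mbf{G}_i$ the diagonal matrix $\diag((d_u^{(k-1)})_u)$ (used once for the ball, repeated across $i\in[n]$ for the polydisc, summing the $g_i$ to $1-\sum_j X_j^2$). The one point where you diverge is how the admissibility constraint $\mbf{G}_i-\mbf{I}_i\in\widehat{\mc{S}}_+^{(k_{g_i})}$ is met: the paper simply rescales the whole solution by a large $r>0$ (feasible since the $d_u^{(k-1)}$ are positive), accepting the larger constant trace $r\,a_k$, whereas you prove the sharper bound $d_u^{(k-1)}\ge 1$ from the recursion in Lemmas \ref{lem:sum.equal.len}--\ref{lem:equality} (each $c_u^{(r-1)}\ge 1$ and $d_u^{(k-1)}$ is a nonempty sum of these), so that $r=1$ already works and $a_k=1+k$. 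Your bound is correct and is arguably preferable given the paper's own remark that small constant traces are desirable for first-order methods; your flagged ``delicate point'' is indeed the only step requiring more than what the lemmas literally state, and your argument for it is sound.
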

\begin{proof}
{It is sufficient in both cases to show that \eqref{eq:find.CTP.diag} has a feasible solution for every $k\in\N^{\ge k_{\min}}$.}

Let $m=1$ and $g_1=1-\sum_{j\in[n]} X_j^2$. 
%It is sufficient to show that \eqref{eq:find.CTP.diag} has a feasible solution for every $k\in\N^{\ge k_{\min}}$.
By Lemma \ref{lem:equality}, $a_k=\tr(\mathbf W_k \mathbf W_k^*)+\tr(\mathbf G_1\mathbf W_{k-1} g_1 \mathbf W_{k-1}^*)$,
where  $\mathbf G_1=\diag((d_u^{(k-1)})_{w\in\mathbf W_{k-1}})$ is pd.
Denote by $\mathbf I_{s(k)}$ the identity matrix of size $s(k)$.
Thus with large enough $r>0$, $(ra_k,(r\mathbf I_{s(k)},r\mathbf G_{1}),\mathbf 0)$ is a feasible solution of \eqref{eq:find.CTP.diag}, for every $k\in\N^{\ge k_{\min}}$.

{On the other hand, if} $m=n$ and $g_j=\frac{1}{n}-X_j^2$, for $j\in[n]${, then
%It is sufficient to show that \eqref{eq:find.CTP.diag} has a feasible solution for every $k\in\N^{\ge k_{\min}}$.
by} Lemma \ref{lem:equality}, 
$a_k=  \tr(\mathbf W_k \mathbf W_k^*)+\tr(\mathbf G\mathbf W_{k-1}(\sum_{j\in[n]} g_j )\mathbf W_{k-1}^*) 
     =\tr(\mathbf W_k \mathbf W_k^*)+\sum_{j\in[n]}\tr(\mathbf G\mathbf W_{k-1} g_j \mathbf W_{k-1}^*)\,,
$
where the diagonal matrix $\mathbf G=\diag((d_u^{(k-1)})_{w\in\mathbf W_{k-1}})$ is pd.
%Note by $\mathbf I_{s(k)}$ the identity matrix of size $s(k)$.
Thus with large enough $r>0$, $(ra_k,(r\mathbf I_{s(k)},r\mathbf G),\mathbf 0)$ is a feasible solution of \eqref{eq:find.CTP.diag}, for every $k\in\N^{\ge k_{\min}}$.
\end{proof}

Since small constant traces are highly desirable for efficiency of first-order algorithms (e.g. CGAL), we search for an optimal solution of LP \eqref{eq:find.CTP.diag} instead of just a feasible solution.

\subsection{Universal algorithm}
Algorithm \ref{alg:sol.nonsmooth.hier.B} below solves EG \eqref{eq:def.EG} 
where CTP can be verified by LP. {A similar algorithm solves NCPOP \eqref{eq:trmin}.}
% based on  SDP solvers with CTP which are summarized in appendices \ref{sec:sdp.ctp.dense} (CGAL) and \ref{sec:spectral.method.dense} (SM).

\begin{algorithm}
    \caption{SpecialEP-CTP}
    \label{alg:sol.nonsmooth.hier.B} 
    \small
    \textbf{Input:} EG \eqref{eq:def.EG} and a relaxation order $k\in\mbb{N}^{\geq k_{\min}}$\\
    \textbf{Output:} The optimal value $\tau_{k}$ of SDP \eqref{eq:dual.diag.moment.mat}
    \begin{algorithmic}[1]
    % \For {$k\in\N^{\ge k_{\min}}$}{}
    \State Solve LP \eqref{eq:find.CTP.diag} to obtain an (optimal) solution $(\xi_{k},\mbf{G}_{i,k},\mbf{H}_{j,k})$;
    \State Let $a_{k}=\xi_{k}$ and $\mbf{P}_{k}=\diag(\mbf{G}_{0,k}^{1/2},\dotso,\mbf{G}_{m,k}^{1/2})$;
    \State Compute the optimal value $\tau_{k}$ of SDP \eqref{eq:dual.diag.moment.mat} by 
    running an algorithm based on first-order methods, and which exploits CTP.
    % \EndFor
    \end{algorithmic}
\end{algorithm}
Two examples of algorithms based on first-order methods and which exploit CTP are CGAL \cite{yurtsever2019conditional} or {SM} %spectral method (SM) 
\cite[Appendix A.3]{mai2020exploiting}.

\section{CTP with correlative sparsity}
\label{sec:CTP.correlative.EP}

In this section, we show that the CTP can analogously be exploited for {EG \eqref{eq:def.EG}}  %noncommutative optimization 
with \emph{sparse} nc polynomials. For brevity, we {focus on EG, and show only} the framework for \emph{correlative sparsity} (CS) \cite{klep2019sparse}, however the {trace minimization setting, as well as the} frameworks for \emph{term sparsity} (TS) as well as \emph{correlative-term sparsity} (CS-TS) \cite{wang2020exploiting} are very similar. We note that the proofs are very similar to those presented in \S \ref{sec:ctp.dense.EP}, and so we omit them for brevity.
	
To begin with, we define CS and present the associated approximation hierarchies for EG \eqref{eq:def.EG}  satisfying CS, which were initially proposed in \cite{waki2006sums,klep2019sparse}. 
%For the sake of conciseness, we focus on EG but similar statements hold for trace minimization.

\subsection{EPs with CS}\label{sec:sparse.EP}
For $w=X_{i_{1}}\dotso X_{i_{r}}$, let $\var(w):=\set{i_{1},\dotsc,i_{r}}$.
For $I\subseteq [n]$, let $\ul{X}(I):=\set{X_{j}:j\in I }$ and $\mbf{W}^{I}_{d}(\ul{X}):=\{ w\in \mbf{W}_{d}(\ul{X}) : \var(w)\subseteq \ul{X}(I) \}$  with length $\s(d,\abs{I}):= \sum_{i=0}^{d}\abs{I}^{i}$.
%$\s(\abs{I},d):= \sum_{i=0}^{k}\abs{I}^{i}$
Similarly, we note $\mbf{V}^{I}_{d}(\ul{X}):=\set{ w\in \mbf{V}_{d}(\ul{X}) : \var(w)\subseteq \ul{X}(I) }$.
Given $\mbf{y}=(y_{w})_{w\in\mbf{W}_{2d}}$, and $I\subseteq [n]$, the nc Hankel submatrix associated to $I$ of order $d$ is defined as 
$$
\left( \mbf{M}_{d}(\mbf{y},I) \right)_{u,v} := L_{\mbf{y}}(u^*v), \text{ for } u,v\in \mbf{W}^{I}_{d}
$$ 
and for $q\in\mbb{R}\lr{\ul{X}(I)}$, the localizing (sub)matrix is
$$
( \mbf{M}_{d_{q}}(q\mbf{y}, I) )_{u,v} :=  L_{\mbf{y}}(u^*qv) , \text{ for } u,v\in \mbf{W}^{I}_{d_{q}}.
$$

Assume that $\set{I_{j}}_{j\in[p]}$ (with $n_{j}:=\abs{I_{j}}$) are the maximal cliques of (a chordal extension of) the correlative sparsity pattern (csp) graph associated to EG \eqref{eq:def.EG}, as defined in \cite{waki2006sums,klep2019sparse}.
% Then $\cup_{j\in[n]}I_j=[n]$ and 
% ($\forall\ j\in[p]\,,\,\exists\ f_j\in \R[x(I_j)]\,:\,f=\sum_{j\in[p]}f_j$).
%
Let $\set{J_{j}}_{j\in[p]}$ (resp. $\set{W_{j}}_{j\in[p]}$) be a partition of $[m]$ (resp. $[\ell]$) such that for all $i\in J_{j}$, $g_{i}\in \mbb{R}\lr{\ul{X}(I_{j})}$ (resp. $i\in W_{j}$, $h_{i}\in \mbb{R}\lr{\ul{X}(I_{j})}$), for every $j\in[p]$.
For each  $j\in [p]$, let $m_{j}:=\abs{J_{j}}$, $l_{j}:=\abs{W_{j}}$ and $\mf{g}_{J_{j}}:=\set{g_{i} : i\in J_{j} }$, $\mf{h}_{W_{j}}:=\set{h_{i}: i\in W_{j} }$.
Then $Q(\mf{g}_{J_{j}})$ (resp. $I(\mf{h}_{W_{j}})$) is a quadratic module (resp. an ideal) in $\mbb{R}\lr{\ul{X}(I_{j})}$, for each $j\in[p]$.

For each $k\in \mbb{N}^{\geq k_{\min}}$, consider the hierarchy of sparse SOHS relaxations
\begin{equation}\label{eq:primal.cs.SOS}
	\rho_{k}^{\text{cs}} := \sup \set{ \xi : f-\xi\in\sum_{j\in[p]} \left( Q(\mf{g}_{J_{j}})_{k}+I(\mf{h}_{W_{j}})_{k} \right) }.
\end{equation}
This relaxation can be stated as a primal SDP similar to \eqref{eq:sos.hierarchy}, but we are mostly interested in the dual SDP, which can be stated as follows
\begin{equation}\label{eq:dual.cs.SDP} 
	\tau_{k}^{\text{cs}} := \inf \limits_{\mbf{y} \in {\mbb{R}^{\s({2k})} }} \set{
		L_{\mbf{y}}(f) \left|
		\begin{aligned}
			& y_{1} =1, \mbf{M}_{k}(\mbf{y}, I_{j}) \succeq 0, j\in[p]. \\
			& \mbf{M}_{k_{g_{i}} }(g_{i} \mbf{y}, I_{j})   \succeq 0, i\in J_{j}, j\in[p], \\
			& \mbf{M}_{k_{h_{i}} }(h_{i} \mbf{y}, I_j) = 0 , i\in W_{j}, j\in[p]
		\end{aligned}
	\right. 
	}.
\end{equation}

% \begin{assumption}\label{ass:cs}(Compactness assumption) There exists a ball constraint on each clique of variables, i.e.,
% \[\forall j\in[p],\quad R_j-\|x(I_j)\|_2^2\in g_{J_j}\textrm{ for some }R_j>0\,.\] 
% \end{assumption}

It is shown in \cite[Corollary 6.6]{klep2019sparse} that the primal-dual SDP pair arising from \eqref{eq:primal.cs.SOS} are guaranteed to converge to the optimal value if there are ball constraints present on each clique of variables.

\subsection{Exploiting CTP with correlative sparsity}
\label{sec:def.CTP.each.clique}
Consider EG \eqref{eq:def.EG} with CS described in \S \ref{sec:sparse.EP}.
Given $j\in[p]$, $k\in\mbb{N}^{\geq k_{\min}}$, and $\mbf{y}\in\mbb{R}^{\s(2k)}$, let 
$$
\mbf{D}_{k}(\mbf{y}, I_{j} ):=\diag( \mbf{M}_{k}(\mbf{y}, I_{j} ),(\mbf{M}_{k_{g_{i}} }(g_{i}\mbf{y}, I_{j} ) )_{ i\in J_{j} } ) ,
$$
with size denoted by $s_{k,j}$.
Then SDP \eqref{eq:dual.cs.SDP} can be rewritten as
\begin{equation}\label{eq:cs-ts.SDP.simple}
	\tau_{k}^{\text{cs}} = \inf \limits_{\mbf{y} \in {\mbb{R}^{\s({2k})} }} 
	\set{ L_{\mathbf y}(f)
		\left|
		\begin{aligned}
		& y_{1} = 1, \mbf{D}_{k}( \mbf{y}, I_{j} ) \succeq 0 , j\in[p] ,   \\
		& \mbf{M}_{k_{h_{i}} }(h_{i} \mbf{y}, I_{j} )   = 0, i\in W_j, j\in[p]
	\end{aligned}
	\right.
	}.
\end{equation}
As in the dense case, let us define 
\begin{align*}
\mc{S}^{(k,j)} := & \bigg\{ \mbf{Y}\in\mbb{S}^{\s_{k,j}} :  \mbf{Y}=\diag(\mbf{Y}_{0}, (\mbf{Y}_{i})_{i \in J_j}  ), \mbf{Y}_{0} \in \mbb{S}^{\s(k,n_j)} \\ 
& \text{ and each } \mbf{Y}_{i}\in \mbb{S}^{\s(k_{g_{i}},n_i)} \bigg\}.  
\end{align*}
We define CTP for EP with CS as follows.
\begin{definition}\label{def:ctp.cs}(CS-CTP)
We say that EG \eqref{eq:def.EG} with CS has CTP if for every $k\in\mbb{N}^{\geq k_{\min}}$ and for every $j\in[p]$, there exists a positive number $a_{k}^{(j)}$ and $\mbf{P}_{k}^{(j)}\in  \mc{S}^{(k,j)}_{++}    $  such that for all $\mbf{y} \in \mbb{R}^{\s(2k)}$,
\begin{equation*}
    \left.
	\begin{array}{rl}
		&\mbf{M}_{k_{h_{i}} }(h_{i}\mbf{y},I_{j})=0 , i\in W_{j},  \\
		& y_{1}=1
	\end{array}
	\right\}
	\Rightarrow  
	\tr \left( \mbf{P}_{k}^{(j)} \mbf{D}_{k}(\mbf{y},I_{j}) \mbf{P}_{k}^{(j)} \right) = a_{k}^{(j)}.
\end{equation*}
\end{definition}

The following result provides a sufficient condition for an EG \eqref{eq:def.EG} with CS to satisfy CTP. 
\begin{theorem}\label{theo:generic.ctp.cs}
Assume that there is an nc ball constraint on each clique of variables, i.e.,
$1-\sum_{i\in I_{j}} X_{i}^{2} \in \mf{g}_{J_{j}}$,  for every $j \in[p]$.
Then one has
$\mbb{R}^{>0}\subseteq Q_{k}^\circ(\mf{g}_{J_{j}})$, for all $k\in\mbb{N}^{\geq k_{\min}}$ and for all $j\in [p]$.
As a consequence, EG \eqref{eq:def.EG} has CTP.
%, i.e. Assumption \ref{ass:ctp.cs} holds.
\end{theorem}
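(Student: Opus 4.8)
The plan is to reduce the correlative-sparsity statement to the dense result Theorem~\ref{theo:generic.ctp} applied clique by clique. Fix $j\in[p]$ and work inside the free algebra $\mbb{R}\lr{\ul{X}(I_{j})}$ in the $n_{j}=\abs{I_{j}}$ letters $\set{X_{i}:i\in I_{j}}$. By hypothesis the constraint set $\mf{g}_{J_{j}}$ contains $g:=1-\sum_{i\in I_{j}}X_{i}^{2}$, which is precisely the nc ball constraint in these $n_{j}$ variables. Hence Theorem~\ref{theo:generic.ctp}, read over $\mbb{R}\lr{\ul{X}(I_{j})}$ rather than over $\mbb{R}\lr{\ul{X}}$, applies verbatim and yields $\mbb{R}^{>0}\subseteq Q_{k}^{\circ}(\mf{g}_{J_{j}})$ for every $k\in\mbb{N}^{\ge k_{\min}}$. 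Concretely, Lemma~\ref{lem:equality} specialized to the letters $\ul{X}(I_{j})$ gives a positive sequence $(d_{u}^{(k-1)})_{u\in\mbf{W}^{I_{j}}_{k-1}}$ with $\sum_{w\in\mbf{W}^{I_{j}}_{k}}w^{*}w = 1+k+\sum_{u}d_{u}^{(k-1)}u^{*}(\sum_{i\in I_{j}}X_{i}^{2}-1)u$, and then the argument of Theorem~\ref{theo:generic.ctp} (absorbing the remaining generators $g_{i}$, $i\in J_{j}$, into a perturbation of the identity block via a small $\delta>0$) exhibits $a_{k}^{(j)}:=1+k\in Q_{k}^{\circ}(\mf{g}_{J_{j}})$.

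Next I would turn the membership $a_{k}^{(j)}\in Q_{k}^{\circ}(\mf{g}_{J_{j}})$ into the CTP statement of Definition~\ref{def:ctp.cs}, mirroring the proof of Theorem~\ref{theo:suff.cond.CTP}. Writing the strict representation $a_{k}^{(j)}=\tr(\mbf{G}_{0}\mbf{W}^{I_{j}}_{k}(\mbf{W}^{I_{j}}_{k})^{*})+\sum_{i\in J_{j}}\tr(\mbf{G}_{i}\mbf{W}^{I_{j}}_{k_{g_{i}}}g_{i}(\mbf{W}^{I_{j}}_{k_{g_{i}}})^{*})$ with all $\mbf{G}_{i}\succ0$, set $\mbf{P}_{k}^{(j)}:=\diag(\mbf{G}_{0}^{1/2},(\mbf{G}_{i}^{1/2})_{i\in J_{j}})\in\mc{S}^{(k,j)}_{++}$. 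Applying the Riesz functional $L_{\mbf{y}}$ to the identity and using that each local Hankel/localizing submatrix satisfies $\tr(\mbf{G}_{i}\mbf{M}_{k_{g_{i}}}(g_{i}\mbf{y},I_{j}))=\tr(\mbf{G}_{i}^{1/2}\mbf{M}_{k_{g_{i}}}(g_{i}\mbf{y},I_{j})\mbf{G}_{i}^{1/2})$, the left side is the constant $a_{k}^{(j)}$ while the right side collapses to $\tr(\mbf{P}_{k}^{(j)}\mbf{D}_{k}(\mbf{y},I_{j})\mbf{P}_{k}^{(j)})$; this holds for every $\mbf{y}$, in particular on the feasible set cut out by $y_{1}=1$ and $\mbf{M}_{k_{h_{i}}}(h_{i}\mbf{y},I_{j})=0$ for $i\in W_{j}$. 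The equality constraints are not even needed here since $m$-type and $h$-type generators enter only through $\mbf{D}_{k}(\mbf{y},I_{j})$ and the identity block, exactly as in the dense case.

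The only genuinely non-routine point is bookkeeping: one must check that Lemma~\ref{lem:sum.equal.len}, Lemma~\ref{lem:equality} and Theorem~\ref{theo:generic.ctp} are insensitive to which ambient set of letters is used, i.e. that restricting from $\ul{X}$ to $\ul{X}(I_{j})$ changes nothing in their proofs. This is clear because those arguments only manipulate words over the letters actually appearing, use only the single ball relation $\sum X_{i}^{2}-1$ in those letters, and never reference the other cliques; the perturbation scalar $\delta=1/(\abs{\lambda_{\max}(\mbf{U})}+1)$ is computed from the $\mbf{G}_{i}$, $i\in J_{j}$, alone. Everything else — the partition hypotheses $g_{i}\in\mbb{R}\lr{\ul{X}(I_{j})}$ for $i\in J_{j}$, and the matching of indices between $\mbf{D}_{k}(\mbf{y},I_{j})$ and $\mc{S}^{(k,j)}$ — is already built into the definitions in \S\ref{sec:sparse.EP}--\S\ref{sec:def.CTP.each.clique}. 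Hence the result follows, and since it holds for each $j\in[p]$ independently, EG~\eqref{eq:def.EG} with CS has CTP in the sense of Definition~\ref{def:ctp.cs}.
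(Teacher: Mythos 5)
Your proposal is correct and follows exactly the route the paper intends: the paper itself only remarks that ``a proof of Theorem~\ref{theo:generic.ctp.cs} can be obtained in a similar fashion to \S\ref{sec:sufficient.CTP}, by considering each clique of variables,'' and your argument is precisely that reduction, applying Lemma~\ref{lem:equality} and the perturbation argument of Theorem~\ref{theo:generic.ctp} within each algebra $\mbb{R}\lr{\ul{X}(I_{j})}$ and then converting membership $a_{k}^{(j)}\in Q_{k}^{\circ}(\mf{g}_{J_{j}})$ into Definition~\ref{def:ctp.cs} as in Theorem~\ref{theo:suff.cond.CTP}. The details you supply (insensitivity of the lemmas to the ambient letter set, the block structure of $\mbf{P}_{k}^{(j)}$ matching $\mc{S}^{(k,j)}$) are exactly the ones the paper omits for brevity, and they check out.
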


A proof of Theorem \ref{theo:generic.ctp.cs} can be obtained in a similar fashion to \S \ref{sec:sufficient.CTP}, by considering each clique of variables. 
\if{
This approach also leads to the following corollary, similar to how Corollary \ref{coro:stric.fea.sol.sos} was obtained. 
%We mention the following corollary:
\begin{corollary}\label{coro:slater.con.cs}
	Under the assumptions of Theorem \ref{theo:generic.ctp.cs}, the SDP pair arising from the SOHS relaxation \eqref{eq:primal.cs.SOS} exhibit Slater's condition for all $k\in\mbb{N}^{\geq k_{\min}}$.
\end{corollary}
}\fi

\subsection{Verifying CS-CTP via linear programming}
As in the dense case, given an EG \eqref{eq:def.EG} with CS, we can verify if CS-CTP is satisfied via a hierarchy of LPs.
For every $k\in\mbb{N}^{\geq k_{\min}}$ and for every $j\in[p]$, let $\widehat{\mc{S}}^{(k,j)}$ be the set of real diagonal matrices of size $\s(k,n_{j})$ and consider the following LP
\begin{equation}\label{eq:find.CTP.CS}
	\inf \limits_{ \xi, \mbf{G}_{i}, \mbf{H}_{i} } 
	\set{
		\xi \left|
		\begin{aligned}	&\mbf{G}_{i}-\mbf{I}_{i}^{(j)} \in \widehat{\mc{S}}^{( k_{g_{i}}, j)}_{+} , i\in J_{j}\cup \set{0},  \\
			&\xi= \sum_{i\in \set{0} \cup J_{j} } \tr \left( \mbf{G}_{i} \mbf{W}_{ k_{ g_{i} } }^{ I_{j} } g_{i} 
			(\mbf{W}_{ k_{ g_{i} } }^{ I_{j} } )^{*} \right)  \\
			& + \sum_{i\in W_{j} } \tr \left( \mbf{H}_{i}
			\mbf{W}_{k_{ h_{i} } }^{ I_{j} } h_{i}
			( \mbf{W}_{k_{ h_{i} } }^{ I_{j} } )^{*} \right)
		\end{aligned}
		\right. 
	},
\end{equation}
where $\mbf{I}_{i}^{(j)}$ is the identity matrix of size $s(k_{g_{i}}, j)$, for every $i\in\set{0}\cup J_{j}$.

\begin{lemma}\label{lem:feas.LP.cs}
Let EG \eqref{eq:def.EG} with CS be as described in \S \ref{sec:sparse.EP}. If LP \eqref{eq:find.CTP.CS} has a feasible solution $(\xi_{k}^{(j)},\mbf{G}_{i,k}^{(j)},\mbf{H}_{i,k}^{(j)})$, for every $k\in\mbb{N}^{\geq k_{\min}}$ and for every $j\in[p]$, then EG \eqref{eq:def.EG} satisfies CS-CTP with $\mbf{P}_{k}^{(j)}=\diag( (\mbf{G}_{0,k}^{(j)})^{1/2},( (\mbf{G}_{i,k}^{(j)})^{1/2})_{i\in J_{i}} )$ and $a_{k}^{(j)}=\xi_{k}^{(j)}$, for $k\in\mbb{N}^{\geq k_{\min}}$ and for $j\in[p]$.
\end{lemma}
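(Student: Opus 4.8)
The plan is to give a clique-by-clique adaptation of the proof of Lemma~\ref{lem:feas.LP}, which in turn mimics the proof of Theorem~\ref{theo:suff.cond.CTP} with $a_{k}=\xi_{k}$, replacing the global words $\mbf{W}$ by the clique words $\mbf{W}^{I_{j}}$ and the global Hankel/localizing matrices by their clique-$I_{j}$ restrictions. Fix $k\in\mbb{N}^{\geq k_{\min}}$ and $j\in[p]$, and let $(\xi_{k}^{(j)},\mbf{G}_{i,k}^{(j)},\mbf{H}_{i,k}^{(j)})$ be the assumed feasible point of LP~\eqref{eq:find.CTP.CS}. The constraint $\mbf{G}_{i}-\mbf{I}_{i}^{(j)}\in\widehat{\mc{S}}^{(k_{g_{i}},j)}_{+}$ forces each $\mbf{G}_{i,k}^{(j)}$ to be diagonal with all diagonal entries at least $1$, hence positive definite; so $(\mbf{G}_{i,k}^{(j)})^{1/2}$ is well defined and positive definite of size $\s(k_{g_{i}},n_{j})$, and $\mbf{P}_{k}^{(j)}:=\diag((\mbf{G}_{0,k}^{(j)})^{1/2},((\mbf{G}_{i,k}^{(j)})^{1/2})_{i\in J_{j}})$ indeed belongs to $\mc{S}^{(k,j)}_{++}$. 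Put $a_{k}^{(j)}:=\xi_{k}^{(j)}$.

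The core step is to apply the Riesz functional $L_{\mbf{y}}$ to the equality constraint of~\eqref{eq:find.CTP.CS}, which is a polynomial identity in $\mbb{R}\lr{\ul{X}(I_{j})}$, for an arbitrary $\mbf{y}\in\mbb{R}^{\s(2k)}$ with $y_{1}=1$ and $\mbf{M}_{k_{h_{i}}}(h_{i}\mbf{y},I_{j})=0$ for all $i\in W_{j}$. Using the definitions of the clique-$I_{j}$ Hankel and localizing submatrices together with the standard identity $L_{\mbf{y}}\!\big(\tr(\mbf{G}\,\mbf{W}^{I_{j}}_{d}\,q\,(\mbf{W}^{I_{j}}_{d})^{*})\big)=\tr(\mbf{G}\,\mbf{M}_{d}(q\mbf{y},I_{j}))$ that also underlies the passage from~\eqref{eq:primal.cs.SOS} to~\eqref{eq:dual.cs.SDP}, the $W_{j}$-indexed terms contribute $\sum_{i\in W_{j}}\tr(\mbf{H}_{i,k}^{(j)}\,\mbf{M}_{k_{h_{i}}}(h_{i}\mbf{y},I_{j}))=0$, while the $(\set{0}\cup J_{j})$-indexed terms contribute $\sum_{i\in\set{0}\cup J_{j}}\tr(\mbf{G}_{i,k}^{(j)}\,\mbf{M}_{k_{g_{i}}}(g_{i}\mbf{y},I_{j}))$, the $i=0$ summand being $\tr(\mbf{G}_{0,k}^{(j)}\,\mbf{M}_{k}(\mbf{y},I_{j}))$ since $g_{0}=1$. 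Writing $\mbf{G}_{i,k}^{(j)}=(\mbf{G}_{i,k}^{(j)})^{1/2}(\mbf{G}_{i,k}^{(j)})^{1/2}$ and using cyclicity of the trace, this last sum equals $\tr(\mbf{P}_{k}^{(j)}\,\mbf{D}_{k}(\mbf{y},I_{j})\,\mbf{P}_{k}^{(j)})$ by the matching block structures of $\mbf{P}_{k}^{(j)}$ and $\mbf{D}_{k}(\mbf{y},I_{j})$. Since the left-hand side of the identity is the constant $\xi_{k}^{(j)}$ and $L_{\mbf{y}}(\xi_{k}^{(j)})=\xi_{k}^{(j)}y_{1}=\xi_{k}^{(j)}$, we obtain $\tr(\mbf{P}_{k}^{(j)}\,\mbf{D}_{k}(\mbf{y},I_{j})\,\mbf{P}_{k}^{(j)})=a_{k}^{(j)}$, which is exactly the implication of Definition~\ref{def:ctp.cs}. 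To see $a_{k}^{(j)}>0$, apply this implication to the truncated moment sequence $\mbf{y}$ of a global minimizer $(\ul{A},\mbf{v})$ of EG~\eqref{eq:def.EG} restricted to the clique $I_{j}$ (this restriction lies in $\mc{D}_{\mf{g}_{J_{j}}}^{\infty}\cap\mc{D}_{\mf{h}_{W_{j}}}^{\infty}$), for which $\mbf{D}_{k}(\mbf{y},I_{j})\succeq0$ is a genuine Gram/moment matrix and $\tr(\mbf{D}_{k}(\mbf{y},I_{j}))\geq(\mbf{M}_{k}(\mbf{y},I_{j}))_{1,1}=y_{1}=1$; with $\mbf{P}_{k}^{(j)}\succ0$ the trace is then strictly positive.

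The only point requiring care — and the reason the dense proof is invoked rather than copied verbatim — is the clique-restricted polynomial-to-matrix dictionary: one must verify that for $q\in\mbb{R}\lr{\ul{X}(I_{j})}$ the coefficients of $\tr(\mbf{G}\,\mbf{W}^{I_{j}}_{d}\,q\,(\mbf{W}^{I_{j}}_{d})^{*})$ pair with the entries $L_{\mbf{y}}(u^{*}qv)$, $u,v\in\mbf{W}^{I_{j}}_{d_{q}}$, exactly as the definition of $\mbf{M}_{d_{q}}(q\mbf{y},I_{j})$ prescribes, and that these submatrices form the diagonal blocks of $\mbf{D}_{k}(\mbf{y},I_{j})$. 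Once this bookkeeping is settled the rest is a line-by-line transcription of the proof of Theorem~\ref{theo:suff.cond.CTP}, and I do not expect any genuine difficulty.
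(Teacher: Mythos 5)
Your proof is correct and takes essentially the same route the paper intends: the paper omits the proof of Lemma~\ref{lem:feas.LP.cs}, stating only that it parallels the dense case (Lemma~\ref{lem:feas.LP}, itself a transcription of the proof of Theorem~\ref{theo:suff.cond.CTP}), and your clique-by-clique application of $L_{\mbf{y}}$ to the LP identity, with the $W_{j}$-terms killed by the vanishing localizing submatrices and the square-root/cyclicity step producing $\tr(\mbf{P}_{k}^{(j)}\mbf{D}_{k}(\mbf{y},I_{j})\mbf{P}_{k}^{(j)})$, is exactly that argument spelled out. The extra verification that $a_{k}^{(j)}>0$ via a genuine moment sequence of a feasible point is a welcome detail the paper leaves implicit.
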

%The proof of Lemma \ref{lem:feas.LP.cs} is similar to that of Theorem \ref{theo:suff.cond.CTP}.

% \subsection{Special class of EPs with CTP}
% \label{sec:sufficient.CTP.cs}

Similar to \S \ref{sec:ctp.dense.EP}, two special cases where CS-CTP can be verified through LP \eqref{eq:find.CTP.CS}, are the nc polydisc, and the nc ball on each clique of variables. 

\begin{proposition}\label{prop:suff.cond.feas.cs}
Let EG \eqref{eq:def.EG} with CS be as described in \S \ref{sec:sparse.EP}. Suppose either of the following holds
\begin{itemize}
    \item Case 1: $\mf{g}_{J_{j}}=\set{ 1-\sum_{i\in J_{j}} X_{i}^{2} }$, $j\in[p]$.
    \item Case 2: $\mf{g}_{J_{j}}=\set{ \frac{1}{\abs{J_{j}}}-X_{i}^{2}: i\in J_{j} }$, $j\in[p]$.
\end{itemize}
Then LP \eqref{eq:find.CTP.CS} has a feasible solution for every $k\in\N^{\ge k_{\min}}$, and therefore EG \eqref{eq:def.EG} satisfies CS-CTP.
\end{proposition}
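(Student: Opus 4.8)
The plan is to mirror, clique by clique, the argument used for Proposition~\ref{prop:suff.cond.feas.ball}. By Lemma~\ref{lem:feas.LP.cs} it suffices to exhibit, for every $k\in\N^{\ge k_{\min}}$ and every $j\in[p]$, a feasible point of LP~\eqref{eq:find.CTP.CS}; since the hypotheses of Case~1 (resp. Case~2) hold on all cliques simultaneously, the construction is the same for each $j$, so one fixes an arbitrary clique $j$ and works entirely inside the sub-free-algebra $\R\lr{\ul X(I_j)}$ on the $n_j=\abs{I_j}$ letters of that clique. The only algebraic ingredient is Lemma~\ref{lem:equality}: its proof (via Lemma~\ref{lem:sum.equal.len}) is purely combinatorial and depends only on the number of letters, so it transfers verbatim to $\R\lr{\ul X(I_j)}$ after replacing $n$ by $n_j$ and the monomial vectors $\mathbf W_{\bullet},\mathbf V_{\bullet}$ by their clique-restricted counterparts $\mathbf W_{\bullet}^{I_j},\mathbf V_{\bullet}^{I_j}$. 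This yields a positive real sequence $(d_u^{(k-1)})_{u\in\mathbf W_{k-1}^{I_j}}$ with
\[
\sum_{w\in\mathbf W_k^{I_j}}w^*w \;=\; (1+k)\;+\;\sum_{u\in\mathbf W_{k-1}^{I_j}}d_u^{(k-1)}\,u^*\Bigl(\sum_{i\in I_j}X_i^2-1\Bigr)u\,.
\]

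For Case~1, clique $j$ carries the single ball constraint $g=1-\sum_{i\in I_j}X_i^2$, for which $k_g=k-1$. Rearranging the identity above gives $1+k=\tr\bigl(\mathbf W_k^{I_j}(\mathbf W_k^{I_j})^*\bigr)+\tr\bigl(\mathbf G\,\mathbf W_{k-1}^{I_j}\,g\,(\mathbf W_{k-1}^{I_j})^*\bigr)$ with $\mathbf G=\diag\bigl((d_u^{(k-1)})_u\bigr)$ diagonal and positive definite. Choosing $r\ge\max\{1,\,1/\min_u d_u^{(k-1)}\}$, the tuple with $\xi=r(1+k)$, $\mathbf G_0=r\mathbf I$, $\mathbf G_g=r\mathbf G$ and $\mathbf H_i=\mathbf 0$ for $i\in W_j$ satisfies both the equality constraint and the diagonal-lower-bound constraints $\mathbf G_\bullet-\mathbf I_\bullet^{(j)}\in\widehat{\mc S}_+$ of LP~\eqref{eq:find.CTP.CS}, hence is feasible. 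For Case~2, one has $\sum_{i\in I_j}\bigl(\tfrac1{n_j}-X_i^2\bigr)=1-\sum_{i\in I_j}X_i^2$, so the same identity rewrites as $1+k=\tr\bigl(\mathbf W_k^{I_j}(\mathbf W_k^{I_j})^*\bigr)+\sum_{i\in I_j}\tr\bigl(\mathbf G\,\mathbf W_{k-1}^{I_j}\,(\tfrac1{n_j}-X_i^2)\,(\mathbf W_{k-1}^{I_j})^*\bigr)$ with the same pd diagonal $\mathbf G$; scaling by the same kind of $r$, the tuple $\xi=r(1+k)$, $\mathbf G_0=r\mathbf I$, $\mathbf G_i=r\mathbf G$ for each of the $n_j$ disc constraints, $\mathbf H_i=\mathbf 0$, is feasible. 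In both cases this works for every $k\in\N^{\ge k_{\min}}$ and every $j\in[p]$, so Lemma~\ref{lem:feas.LP.cs} yields CS-CTP.

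I do not expect a genuine obstacle here: the argument is a clique-by-clique transcription of the dense proof. The only mildly delicate points are (i) confirming that Lemma~\ref{lem:equality} is genuinely an identity in the clique sub-algebra $\R\lr{\ul X(I_j)}$ and not merely in the ambient $\R\lr{\ul X}$, which is immediate since its proof never refers to letters outside the clique; and (ii) respecting that LP~\eqref{eq:find.CTP.CS} demands the blocks $\mathbf G_\bullet$ be \emph{diagonal with entries $\ge 1$} rather than merely positive definite, which is exactly what forces the introduction of the scaling factor $r$ — and since each $\mathbf W_{k-1}^{I_j}$ is finite, $\min_u d_u^{(k-1)}>0$, so such an $r$ always exists. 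Tracking through Lemma~\ref{lem:feas.LP.cs} then also gives the explicit constants $a_k^{(j)}=\xi_k^{(j)}$ and change-of-basis matrices $\mathbf P_k^{(j)}$.
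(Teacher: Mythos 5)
Your proof is correct and is exactly the argument the paper intends: the paper omits the proof, stating only that it is ``similar to the dense setting,'' and your clique-by-clique transcription of Proposition~\ref{prop:suff.cond.feas.ball} --- applying the clique-restricted version of Lemma~\ref{lem:equality} in $\R\lr{\ul X(I_j)}$ and scaling by $r$ to meet the diagonal-entries-$\ge 1$ constraint --- is that argument. Your two ``delicate points'' are also handled correctly, including the implicit reading of the index sets $J_j$ in the statement as the variable cliques $I_j$.
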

The proof of the Proposition \ref{prop:suff.cond.feas.cs} is similar to the dense setting.

\subsection{Universal algorithm}
Algorithm \ref{alg:sol.nonsmooth.hier.B.cs} below solves EG \eqref{eq:def.EG} with CS and whose CS-CTP can be verified by solving LP \eqref{eq:find.CTP.CS}.
\begin{algorithm}
    \caption{SpecialEP-CS-CTP}
    \label{alg:sol.nonsmooth.hier.B.cs} 
    \small
    \textbf{Input:} An EG \eqref{eq:def.EG} with CS and a relaxation order $k\in\mbb{N}^{\geq k_{\min}}$ \\
    \textbf{Output:} The optimal value $\tau_{k}^{\text{cs}}$ of SDP \eqref{eq:dual.cs.SDP}
    \begin{algorithmic}[1]
    
    	\For {$j\in[p]$}{} 
    	\State Solve LP
    	\eqref{eq:find.CTP.CS} to obtain an optimal solution $( \xi_{k}^{(j)}, \mbf{G}_{i,k}^{(j)}, \mbf{H}_{j,k}^{(j)} )$;
    	\State Let $a_{k}^{(j)}= \xi_{k}^{(j)}$ and $\mbf{P}_{k}^{(j)}=\diag( (\mbf{G}_{0,k}^{(j)})^{1/2},\dotsc,(\mbf{G}_{m,k}^{(j)})^{1/2} )$;
    	\EndFor
  		\State Compute the optimal value $\tau_{k}^{\text{cs}}$ of SDP \eqref{eq:cs-ts.SDP.simple}  by running an algorithm based on first-order methods and which exploits CTP.
    \end{algorithmic}
\end{algorithm}

\section{Numerical experiments}
\label{sec:benchmark}
In this section we report results of numerical experiments {conducted on the eigenvalue minimization problem \eqref{eq:def.EG}. These results were} obtained by executing Algorithm \ref{alg:sol.nonsmooth.hier.B} and Algorithm \ref{alg:sol.nonsmooth.hier.B.cs}, respectively for dense and sparse randomly generated instances 
of nc quadratically constrained quadratic problems (QCQPs) with CTP.
In the dense case, one computes the first and second order SDP relaxations, namely the optimal values $\tau_1$ and $\tau_2$ of
 SDP \eqref{eq:dual.diag.moment.mat}.
Similarly in the sparse case, one computes the optimal values $\tau_1^{\text{cs}}$ and $\tau_2^{\text{cs}}$ of
 SDP \eqref{eq:cs-ts.SDP.simple}.
The experiments are performed in Julia 1.3.1 with the following software packages:
\begin{itemize}
    \item {\tt NCTSSOS} \cite{wang2020exploiting} is a modeling library for solving Moment-SOS relaxations of sparse EPs based on JuMP (with Mosek 9.1 used as SDP solver).
    \item {\tt Arpack} \cite{lehoucq1998arpack} is used to compute the smallest eigenvalues and the corresponding eigenvectors of real symmetric matrices of (potentially) large size, which is based on the implicitly restarted Arnoldi method  \cite{lehoucq1996deflation}. 
\end{itemize}

Both implementation of Algorithm \ref{alg:sol.nonsmooth.hier.B} and  \ref{alg:sol.nonsmooth.hier.B.cs} are available online:
\begin{center}
    \href{https://github.com/maihoanganh/ctpNCPOP}{{\bf https://github.com/maihoanganh/ctpNCPOP}}.
\end{center}

We use a desktop computer with an Intel(R) Core(TM) i7-8665U CPU @ 1.9GHz $\times$ 8 and 31.2 GB of RAM. 

We use the following notation for the numerical results. 
The number of variables, inequality and equality constraints are denoted by $n$, $m$ and $l$, respectively.
We denote by $k$ the relaxation order used to solve the dense SDP  \eqref{eq:dual.diag.moment.mat} and the sparse SDP \eqref{eq:cs-ts.SDP.simple}.
For NCPOP with CS, let us denote by
$u^{\max}$ the largest size of variable cliques and
$p$ the number of variable cliques.
We note
$\omega$, $s^{\max}$, $\zeta$ and $a^{\max}$  the number of psd blocks,
the largest size of psd blocks,
the number of affine equality constraints and
the largest constant trace of matrices involved in the SDP relaxations, respectively.
Let ``val'' stand for the approximate optimal value of the SDP relaxation with desired accuracy $\varepsilon$ for CGAL, and let ``time'' be the corresponding running time in seconds.
We use ``$-$'' to indicate that the calculation runs out of space.
For all examples tested in this paper, the modeling time for both {\tt NCTSSOS} and {\tt ctpNCPOP} is typically negligible compared to the solving time of Mosek and CGAL. Hence the total running time mainly depends on the solvers and we compare their performances below.

\subsection{Randomly generated dense QCQPs}
\label{sec:experiment.single.ball}
\paragraph{Test problems:} We construct randomly generated dense QCQPs with nc ball and nc polydisk constraints as follows:
\begin{enumerate}
    \item Generate a dense quadratic polynomial objective function $f=\frac{1}{2}\sum_{w\in \mathbf W_2} \bar f_w(w+w^*)\in\Sym\R\lr{\ul{X}}_2$ with coefficients $\bar f_w$ randomly chosen w.r.t. the uniform probability distribution on $(-1,1)$.
        \item Do one of the following two cases:
        \begin{itemize}
            \item nc ball: let $m=1$ and $g_1:=1-\sum_{r\in[n]}X_r^2$;
            \item nc polydisk:  let $m=n$ and $g_i:=\frac{1}{n}-X_i^2$, $i\in[n]$;
        \end{itemize}
    \item Take a random point $\mathbf a$ in $\{x\in\R^n: g_i(x)\ge 0,i\in[m]\}$ w.r.t. the uniform distribution;
    \item For every $j\in[\ell]$, generate a dense quadratic polynomial $h_j=\frac{1}{2}\sum_{w\in \mathbf W_2} \bar h^{(j)}_w(w+w^*)\in\Sym\R\lr{\ul{X}}_2$:
    \begin{enumerate}[(i)]
        \item for each $w\in\mathbf W_2\backslash \{ 1\}$, select a random coefficient $\bar h^{(j)}_w$  in $(-1,1)$ w.r.t. the uniform distribution;
        \item set $\bar h^{(j)}_1:=-\sum_{w\in \mathbf W_2\backslash \{1\}} \bar h^{(j)}_w  w(\mbf{a})$.
    \end{enumerate}
    Then $\mathbf a$ is a feasible solution of EG \eqref{eq:def.EG}.
\end{enumerate}

\begin{table}
    \caption{\small Numerical results for randomly generated dense QCQPs with nc ball constraint}
    \label{tab:QCQP.on.unit.ball}
    {\small\begin{itemize}
        \item EP size: $m=1$, $l=\lceil n/4\rceil $; SDP size: $\omega=2$, $a^{\max}=3$; CGAL accuracy:  $\varepsilon=10^{-4}$.
    \end{itemize}}
\small
\begin{center}
   \begin{tabular}{|c|c|c|c|c|c|c|c|c|}
        \hline
        \multicolumn{2}{|c|}{EP size}&
        &
        \multicolumn{2}{c|}{SDP size} & \multicolumn{2}{c|}{Mosek}&      \multicolumn{2}{c|}{CGAL} \\ \hline
$n$&$l$&$k$ &$s^{\max}$ & $\zeta$&
\multicolumn{1}{c|}{val}& \multicolumn{1}{c|}{time}&
\multicolumn{1}{c|}{val}&
\multicolumn{1}{c|}{time}\\
\hline
\multirow{2}{*}{10}&\multirow{2}{*}{3} &1 & 11 & 5 & -3.2413 & 1 &-3.2411 & 2  \\
\cline{3-9}
&  & 2&111 & 815 & -3.1110 & 28 &-3.1107 & 59  \\
\hline
\multirow{2}{*}{20}&\multirow{2}{*}{5} &1 & 21 & 7 & -3.5534 & 0.03& -3.5525 & 1  \\
\cline{3-9}
&  & 2&421 & 5587 & $-$& $-$& -3.5026 &203\\
\hline
\multirow{2}{*}{30}&\multirow{2}{*}{8} &1& 31 & 10 & -4.6984 & 0.1 & -4.6954&1   \\
\cline{3-9}
&  & 2&931 & 18415 & $-$ & $-$ & -4.6819 & 1392\\
\hline
\end{tabular}    
\end{center}
\end{table}
\begin{table}
    \caption{\small Numerical results for randomly generated dense QCQPs with nc polydisk constraints}
    \label{tab:QCQP.on.box}
    {\small\begin{itemize}
        \item EP size: $m=n$, $l=\lceil n/7\rceil $;  SDP size: $\omega=n+1$, $a^{\max}=3$, CGAL accuracy: $\varepsilon=10^{-4}$.
    \end{itemize}}
\small
\begin{center}
   \begin{tabular}{|c|c|c|c|c|c|c|c|c|}
        \hline
        \multicolumn{2}{|c|}{EP size}&
        &
        \multicolumn{2}{c|}{SDP size} & \multicolumn{2}{c|}{Mosek}&      \multicolumn{2}{c|}{CGAL} \\ \hline
$n$&$l$& $k$ &$s^{\max}$ & $\zeta$&
\multicolumn{1}{c|}{val}& \multicolumn{1}{c|}{time}&
\multicolumn{1}{c|}{val}&
\multicolumn{1}{c|}{time}\\
\hline
\multirow{2}{*}{10}&\multirow{2}{*}{2}& 1&11 &13 &-3.2165 & 0.009&-3.2154 &0.4\\
\cline{3-9}
& &2 &111 &1343 &-3.2039 &26 &-3.2037 &229 \\\hline
\multirow{2}{*}{20}&\multirow{2}{*}{3}& 1&21 &24 &-4.5773 &0.03 &-4.5767 &2\\
\cline{3-9}
& &2 &421 &9514 &$-$ &$-$ & -4.5147& 753 \\\hline
\multirow{2}{*}{30}&\multirow{2}{*}{3}& 1&31 &36 & -5.1182 &0.8 &-5.1172&3\\
\cline{3-9}
& &2 &931 &31311 &$-$ &$-$ & -5.0717&2215 \\\hline
\end{tabular}    
\end{center}
\end{table}

The numerical results are displayed in Tables  \ref{tab:QCQP.on.unit.ball} and \ref{tab:QCQP.on.box}. The results show that CGAL is typically the fastest solver and returns an approximate optimal value which differs from 1\% w.r.t. the one returned by Mosek when $n\le 20$. Mosek runs out of memory when $n\ge 20$ {and $k=2$,} while CGAL still works well.
With our current setting for the CGAL accuracy, the approximate optimal value is correct up to the two first accuracy digits, so we can guarantee that the bound improves from $k=1$ to $k=2$.

\subsection{Randomly generated QCQPs with CS}
\label{sec:experiment.single.ball.sparse}

\paragraph{Test problems:} We construct randomly generated nc QCQPs with CS and ball constraints on each clique of variables as follows:
\begin{enumerate}
    \item Take a positive integer $u$, $p:=\lfloor n/u\rfloor +1$ and let
    \begin{equation}\label{eq:index.vars}
        I_j=\begin{cases}
        [u],&\text{if }j=1\,,\\
        \{u(j-1),\dots,uj\},&\text{if }j\in\{2,\dots,p-1\}\,,\\
        \{u(p-1),\dots,n\},&\text{if }j=p\,;
        \end{cases}
    \end{equation}
    % Then $(I_j)_{j\in[p]}$ satisfies RIP.
    \item Generate a quadratic polynomial objective function $f=\sum_{j\in[p]}f_j$ such that for each $j\in[p]$, $f_j=\sum_{w\in\mbf{W}_2^{I_{j}}} \bar f_w^{(j)} (w+w^*)\in\Sym\R\lr{\ul{X}(I_{j})}_2$, and the coefficients are randomly generated as in the dense setting;
    \item Take $m=p$ and $g_j:=1-\sum_{i\in I_j} X_i^2$, $j\in[m]$.
    \item Take a random point $\mathbf a$ as in the dense setting;
    \item Let $r:=\lfloor l/p\rfloor$ and \begin{equation}\label{eq:assign.eqcons}
        W_j:=\begin{cases} \{(j-1)r+1,\dots,jr\},&\text{if }j\in[p-1]\,,\\
    \{(p-1)r+1,\dots,l\},&\text{if } j=p\,.
    \end{cases}
    \end{equation}
     For every $j\in[p]$ and every $i\in W_j$, generate a quadratic polynomial $h_i=\frac{1}{2}\sum_{w\in\mbf{W}_2^{I_{j}}} \bar h_w^{(i)} (w+w^*)\in\Sym\R\lr{\ul{X}(I_{j})}_2$ as in the dense setting to ensure that $\mathbf a$ is a feasible solution of EG \eqref{eq:def.EG}.
     \if{
    \begin{enumerate}
        \item for each $w\in\mbf{W}_2^{I_{j}}\backslash \{ 1\}$, taking a random coefficient $\bar h_w^{(i)}$ of $h_i$ in $(-1,1)$ w.r.t. the uniform distribution;
        \item setting $\bar h_{1}^{(i)}:=-\sum_{w\in\mbf{W}_2^{I_{j}}\backslash \{1\}} \bar h_w^{(i)} w(\mbf{a})$.
    \end{enumerate}
    Then $\mathbf a$ is a feasible solution of EG \eqref{eq:def.EG}.
        }\fi
\end{enumerate}

% We solve the Moment-SOS relaxation with Mosek, CGAL and LMBM respectively. 

\begin{table}
    \caption{\small Numerical results for randomly generated QCQPs with CS and nc ball constraint on each clique of variables}
    \label{tab:single.ball.sparse.QCQP}
    {\small\begin{itemize}
        \item EP size: $n=1000$, $m=p$, $l=143$, $u^{\max}=u+1$; SDP size: $\omega=2p$, $a^{\max}=3$; CGAL accuracy:  $\varepsilon=10^{-3}$.
    \end{itemize}}
\small
\begin{center}
   \begin{tabular}{|c|c|c|c|c|c|c|c|c|c|}
        \hline
        \multicolumn{2}{|c|}{EP size}&
        &
        \multicolumn{3}{c|}{SDP size} & \multicolumn{2}{c|}{Mosek}&      \multicolumn{2}{c|}{CGAL} \\ \hline
$u$&$p$ &$k$ &$\omega$&$s^{\max}$ & $\zeta$&
\multicolumn{1}{c|}{val}& \multicolumn{1}{c|}{time}&
\multicolumn{1}{c|}{val}&
\multicolumn{1}{c|}{time}\\
\hline
\multirow{2}{*}{10}&\multirow{2}{*}{100}& 1&200 &12 &541 & -2.9659 & 3 & -2.9662&206  \\
\cline{3-10}
& & 2&200 &133 &91850 & -2.9594 & 32008 & -2.9598& 7790 \\
\hline
\multirow{2}{*}{15}&\multirow{2}{*}{66}& 1&132 &27 &405 & -2.3230 & 1 &-2.3225 &38 \\
\cline{3-10}
& &2 &132&703&185592&$-$&$-$&-2.3179&10051\\
\hline
\multirow{2}{*}{20}&\multirow{2}{*}{50}& 1&100 &22 &341 & -2.1517 & 4 &-2.1515& 54 \\
\cline{3-10}
&&2&100&463&290908&$-$ &$-$& -2.1260& 11791 \\
\hline
\end{tabular}    
\end{center}
\end{table}
The number of variables is fixed as $n=1000$. 
We increase the clique size $u$ so that the number of variable cliques $p$ decreases accordingly.
The numerical results are displayed in Table  \ref{tab:single.ball.sparse.QCQP}. 
%We observe similar behaviors of the solvers as in \S \ref{sec:experiment.single.ball}.
Again results in Table  \ref{tab:single.ball.sparse.QCQP} show that CGAL is slower than Mosek for $k=1$ but is  faster for $k=2$ and returns an approximate optimal value which differs from 1\% w.r.t. the one returned by Mosek (for $u\le 10$). Mosek runs out of memory for $k=2$ when $u\ge 15$, while {CGAL is once again able to obtain improved lower bounds.}
%we can still obtain improved lower bounds with CGAL.

\balance
%%

%%
%% The next two lines define the bibliography style to be used, and
%% the bibliography file.

%\bibliographystyle{ACM-Reference-Format}

%%
%% If your work has an appendix, this is the place to put it.
%\appendix
%\include{Appendix}
\section{Conclusion}
We have provided a constructive proof that the constant trace property holds for semidefinite relaxations of eigenvalue or trace optimization problems, whenever an nc ball constraint is present. 
This property can be easily verified by solving a hierarchy of linear programs, when the only involved inequality constraints are either noncommutative ball or nc polydisk constraints.
%for  EPs (e.g., with nc ball or nc polydisk constraints). 
This allows one to use first order methods exploiting the constant trace property (e.g., CGAL) to solve the semidefinite relaxations of large-scale eigenvalue problems more efficiently than with second order interior-point solvers (e.g., Mosek). We have experimentally demonstrated some of these computational gains on eigenvalue minimization. Similar gains shall be achievable for trace minimization.
%Last but not least, it is not hard to exploit the property for trace optimization similarly.
%It is due to the fact that the additional cyclic equality constraints in the moment relaxations does not change any result from the framework dedicated to eigenvalue optimization.
For many testing examples in
this paper, the relative optimality gap of CGAL w.r.t. Mosek is always smaller than 1\%.

As a topic of further research, we intend to rely on our framework to tackle applications arising from quantum information and condensed matter, including bounds on maximal violation levels for Bell inequalities \cite{pal2009} or ground state energies of many body Hamiltonians  \cite{barthel2012solving}. 
Preliminary experiments not reported in this paper show that relying on CGAL improves some  existing bounds for Bell inequalities, while Mosek runs out memory. 
We intend to improve our software implementation to overcome the accuracy issues arising when using CGAL.
A related investigation track is to design a numerical method for finding the constant trace and the change of basis for noncommutative problems with arbitrary inequality constraints (possibly including nc ball constraint).
Ideally, first order semidefinite solvers  should  have  rich numerical properties when combined with the constant trace and the change of basis matrix obtained in our method.
This will allow us to design, implement and analyze a hybrid numeric-symbolic scheme as in  \cite{pe,multivsos18}, to obtain exact nonnegativity certificates of noncommutative problems. 
%{\color{blue} As a topic of further applications,  we would like to exploit CTP-framework in condensed matter and quantum information problems with symmetries (see \cite{yu2020quantum}).}

\bibliographystyle{abbrv}
%\bibliography{References}

\end{document}
\endinput
%%
%% End of file `sample-sigconf.tex'.